\newtheoremstyle{plainNoItalics}{}{}{\normalfont}{}{\bfseries}{.}{ }{}
\theoremstyle{plain}
\newtheorem{thm}{Theorem}[section]
\theoremstyle{plainNoItalics}
\newtheorem{lem}[thm]{Lemma}
\newtheorem{defn}[thm]{Definition}
\newtheorem{rem}[thm]{Remark}
\newtheorem{prop}[thm]{Proposition}
\newtheorem{exa}[thm]{Example}
\newcommand{\f}{\frac}
\newcommand{\beq}{\begin{equation}}
\newcommand{\eeq}{\end{equation}}
\newcommand{\beqa}{\begin{eqnarray}}
\newcommand{\eeqa}{\end{eqnarray}}
\newcommand{\bit}{\begin{itemize}}
\newcommand{\eit}{\end{itemize}}
\newcommand{\bedef}{\begin{defn}}
\newcommand{\edefn}{\end{defn}}
\newcommand{\bpro}{\begin{prop}}
\newcommand{\epro}{\end{prop}}
\begin{document}

\baselineskip=1.6pc


\begin{center}
{\bf
High order maximum principle preserving finite volume method 
for convection dominated problems
}
\end{center}
\vspace{.2in}
\centerline{
Pei Yang \footnote{Department of
Mathematics, University of Houston, Houston, 77204. E-mail:
peiyang@math.uh.edu.}
Tao Xiong \footnote{Department of
Mathematics, University of Houston, Houston, 77204. E-mail:
txiong@math.uh.edu.}
Jing-Mei Qiu \footnote{Department of Mathematics, University of Houston,
Houston, 77204. E-mail: jingqiu@math.uh.edu.
The first, second and the third authors are supported by Air Force Office of Scientific Computing YIP grant FA9550-12-0318, NSF grant DMS-1217008.}
Zhengfu Xu \footnote{Department of
Mathematical Science, Michigan Technological University, Houghton, 49931. E-mail: zhengfux@mtu.edu. Supported by NSF grant DMS-1316662.}
}


\bigskip
\centerline{\bf Abstract}

In this paper, we investigate the application of the maximum principle preserving (MPP) parametrized flux limiters to the high order finite volume scheme with Runge-Kutta time discretization for solving convection dominated problems. Such flux limiter was originally proposed in {\em [Xu, Math. Comp., 2013]} and further developed in {\em[Xiong et. al., J. Comp. Phys., 2013]} for finite difference WENO schemes with Runge-Kutta time discretization for convection equations. The main idea is to limit the temporal integrated high order numerical flux toward a first order MPP monotone flux. In this paper, we generalize such flux limiter to high order finite volume methods solving convection-dominated problems, which is easy to implement and introduces little computational overhead. More importantly, for the first time in the finite volume setting, we provide a general proof that the proposed flux limiter maintains high order accuracy of the original WENO scheme for linear advection problems without any additional time step restriction. For general nonlinear convection-dominated problems, 
we prove that the proposed flux limiter introduces up to $\mathcal{O}({\Delta x^3} + \Delta t^3)$ modification to the high order temporal integrated flux in the original WENO scheme without extra time step constraint. We also numerically investigate the preservation of up to ninth order accuracy of the proposed flux limiter in a general setting. The advantage of the proposed method is demonstrated through various numerical experiments. 


\vfill

\noindent {\bf Keywords:} Convection diffusion equation; High order WENO scheme; Finite volume method; Maximum principle preserving; Flux limiters
\newpage

\newpage

\section{Introduction}
\label{sec1}
\setcounter{equation}{0}
\setcounter{figure}{0}
\setcounter{table}{0}

Recently, there is a growing interest in designing high order maximum principle preserving (MPP) schemes for solving scalar convection-dominated problems \cite{zhang2010maximum, zhang2012maximumcd, mpp_xu, jiang2013parametrized, mpp_xuMD, mpp_xqx}, positivity preserving schemes for compressible Euler and Navier-Stokes equations \cite{hu2013positivity, pp_euler, perthame1996positivity, zhang2010positivity}. The motivation of this family of work arises from the observation that many existing high order conservative methods break down when simulating fluid dynamics in extreme cases such as near-vacuum state. To illustrate the purpose of the family of the MPP methods, we shall consider the solution to the following problem
\begin{equation}
u_t+f(u)_x=a(u)_{xx}, \qquad u(x,0)=u_0(x),
\label{ad1}
\end{equation}
with $a'(u)>0$. The solution to (\ref{ad1}) satisfies the maximum principle, i.e.
\begin{equation}\label{maxmin}
\text{if } u_M=\max_x u_0(x), \text{} u_m=\min_x u_0(x), \text{ then } u(x,t)\in [u_m, u_M].
\end{equation}
Within the high order finite volume (FV) Runge-Kutta (RK) weighted essentially non-oscillatory (WENO) framework, we would like to maintain a discrete form of (\ref{maxmin}):
\begin{equation}\label{maxminD}
\text{if } u_M=\max_x u_0(x), \text{} u_m=\min_x u_0(x), \text{ then } \bar u^n_j \in [u_m, u_M] \text{ for any } n, \text{ } j,
\end{equation}
where $\bar u^n_j$ approximates the cell average of the exact solution with high order accuracy on a given $j$th spatial interval at time $t^n$. 

Efforts for designing MPP high order schemes to solve (\ref{ad1}) can be found in recent work by Zhang et al. \cite{zhang2012maximumcd, yzhang2012maximum}, as a continuous research effort to design high order FV and discontinuous Galerkin (DG) MPP schemes based on a polynomial rescaling limiter on the reconstructed (for FV) or representing (for DG) polynomials \cite{zhang2010maximum}. 
This approach requires the updated {\em cell average} to be written as a convex combination of some local quantities within the range $[u_m, u_M]$. For convection-diffusion problems which do not have a finite speed of propagation, it is difficult to generalize such approach to design MPP schemes that are higher than third order accurate. 
In \cite{jiang2013parametrized}, an alternative approach via a parametrized flux limiter, developed earlier by Xu et al. \cite{mpp_xu,mpp_xqx}, is proposed for the finite difference (FD) RK WENO method in solving convection diffusion equations. The flux limiter is applied to convection and diffusion fluxes together to achieve (\ref{maxminD}) for the approximated point values in the finite difference framework. In this paper, we continue our effort in applying the MPP flux limiters to high order FV RK WENO methods to maintain (\ref{maxminD}) with efficiency. Furthermore, we provide some theoretical analysis on the preservation of high order accuracy for the proposed flux limiter in FV framework. Finally, we remark that our current focus is on convection-dominated diffusion problems for which explicit temporal integration proves to be efficient. For the regime of medium to large diffusion, where implicit temporal integration is needed for simulation efficiency, we refer to earlier work in \cite{fujii1973some, farago2005discrete, farago2006discrete, farago2012discrete} and references therein for the construction of the MPP schemes with finite element framework. The generalization of the current flux limiter is not yet available and is subject to future investigation.  

The MPP methods in \cite{zhang2010maximum, mpp_xu, mpp_xqx} are designed base on the observation that first order monotone schemes in general satisfy MPP property (\ref{maxminD}) with proper Courant-Friedrichs-Lewy (CFL) numbers, while regular high order conservative schemes often fail to maintain (\ref{maxminD}). The MPP flux limiting approach is to seek a linear combination of the first order monotone flux with the high order flux, in the hope of that such combination can achieve both MPP property and high order accuracy under certain conditions, e.g. some mild time step constraint. This line of approach is proven to be successful in \cite{mpp_xqx, jiang2013parametrized} for the FD RK WENO schemes and it is later generalized to the high order semi-Lagrangian WENO method for solving the Vlasov-Poisson system \cite{mpp_vp}. A positivity preserving flux limiting approach is developed in \cite{pp_euler} to ensure positivity of the computed density and pressure for compressible Euler simulations. 
Technically, the generalization of such MPP flux limiters from FD WENO \cite{jiang2013parametrized} to FV WENO method is rather straightforward. Taking into the consideration that FV method offers a more natural framework for mass conservation and flexibility in handling irregular computational domain, we propose to apply the MPP flux limiters to the high order FV RK WENO method to solve (\ref{ad1}). The proposed flux limiting procedure is rather easy to implement even with the complexity of the flux forms in multi-dimensional FV computation. Moreover, a general theoretical proof on preserving both MPP and high order accuracy without additional time step constraint can be done for FV methods when solving a linear advection equation; such result does not hold for high order FD schemes \cite{mpp_xqx}. 


In this paper,  
for the first time, we establish a general proof that, there is no further time step restriction, besides the CFL condition under the linear stability requirement, to preserve high order accuracy when the high order flux is limited toward an upwind first order flux for solving linear advection problem, when the parametrized flux limiters are applied to FV RK WENO method. In other words, both the MPP property and high order accuracy of the original scheme can be maintained without additional time step constraint. For a general nonlinear convection problem, we prove that the flux limiter preserves up to third order accuracy and the discrete maximum principle with no further CFL restriction. {This proof relies on tedious Taylor expansions, and it is difficult to generalize it to results with higher order accuracy (fourth order or higher). On the other hand, such analysis can be extended to a convection-dominated diffusion problem as done in \cite{jiang2013parametrized}.}
Furthermore, numerical results indicate that mild CFL restriction is needed for the MPP flux limiting finite volume scheme without sacrificing accuracy. For more discussions, see Section 3.  

The paper is organized as follows. In Section 2, we provide the numerical algorithm of the high order FV RK WENO schemes with MPP flux limiters. In Section 3, theoretical analysis is given for a linear advection problem and general nonlinear problems. Numerical experiments are demonstrated in Section 4. We give a brief conclusion in Section 5.

\section{A MPP FV method}
\label{sec2}
\setcounter{equation}{0}
\setcounter{figure}{0}
\setcounter{table}{0}

In this section, we propose a high order FV scheme for the convection-diffusion equation. In the proposed scheme, the high order WENO reconstruction of flux is used for the convection term, while a high order compact reconstruction of flux is proposed for the diffusion term. 

For simplicity, we first consider a one dimensional (1D) case. The following uniform spatial discretization is used for a 1D bounded domain $[a, b]$,
\begin{equation}
a=x_{\frac{1}{2}}<x_{\frac{3}{2}}<\cdots<x_{N-\frac{1}{2}}<x_{N+\frac{1}{2}}=b, \ \Delta x =\frac{b-a}{N}.
\end{equation}
with the computational cell and cell center defined as
\begin{equation}
I_j=[x_{j-\frac{1}{2}}, x_{j+\frac{1}{2}}], \ x_j=\frac{1}{2}(x_{j-\frac{1}{2}}+x_{j+\frac{1}{2}}), \ j=1,2,\cdots,N.
\end{equation}
Let $\bar{u}_j$ denote approximation to the cell average of $u$ over cell $I_j$. The FV scheme is designed by integrating equation (\ref{ad1}) over each computational cell $I_j$ and then dividing it by $\Delta x$,
\begin{equation}\label{1dscheme}
\frac{d\bar{u}_j}{dt}  = -\frac{1}{\Delta x} (\hat{H}_{j+\frac{1}{2}}^C - \hat{H}_{j-\frac{1}{2}}^C) + \frac{1}{\Delta x} (\hat{H}_{j+\frac{1}{2}}^D - \hat{H}_{j-\frac{1}{2}}^D),
\end{equation}
where $\hat{H}_{j+\frac{1}{2}}^C$ and $\hat{H}_{j+\frac{1}{2}}^D$ are the numerical fluxes for convection and diffusion terms respectively.

For the convection term, one can adopt any monotone flux. For example, in our simulations, we use the Lax-Friedrichs flux
\begin{equation}
\hat{H}_{j+\frac{1}{2}}^C (u_{j+\frac{1}{2}}^-, u_{j+\frac{1}{2}}^+)= \frac{1}{2}\big(f(u_{j+\frac{1}{2}}^-) + \alpha u_{j+\frac{1}{2}}^-\big) + \frac{1}{2}\big(f(u_{j+\frac{1}{2}}^+) - \alpha u_{j+\frac{1}{2}}^+\big), \ \alpha ={\max}_{u_{m}\le u \le u_{M}} |f'(u)|.
\end{equation}
Here $u_{j+\frac{1}{2}}^- \doteq P(x_{j+\frac{1}{2}})$, where $P(x)$ is obtained by reconstructing a $(2k+1)^{th}$ order polynomial whose averages agree with those in a left-biased stencil $\{\bar{u}_{j-k}, \cdots, \bar{u}_{j+k}\}$,
\[
\frac{1}{\Delta x}\int_{I_{l}} P(x)dx = \bar{u}_{l},\ l=j-k,\cdots,j+k.
\]
The reconstruction procedure for $u_{j+\frac{1}{2}}^+$ can be done similarly from a right-biased stencil. To suppress oscillation around discontinuities and maintain high order accuracy around smooth regions of the solution, the WENO mechanism can be incorporated in the reconstruction. Details of such procedure can be found in $\cite{Shu_book}$. 

For the diffusion term, we propose the following {\em compact} reconstruction strategy for approximating fluxes at cell boundaries $a(u)_x|_{x_{j+\frac12}}$. Without loss of generality, we consider a fourth order reconstruction, while similar strategies can be extended to schemes with arbitrary high order. Below we let $u_j$ denote approximation to the point values of $u$ at $x_j$.
\begin{enumerate}
\item Reconstruct $\{ u_{l}\}_{l=j-1}^{j+2}$ from the cell averages $\{ \bar u_{l}\}_{l=j-1}^{j+2}$ by constructing a cubic polynomial $P(x)$, such that
\[
\frac{1}{\Delta x}\int_{I_{l}} P(x)dx = \bar{u}_{l},\ l=j-1,\cdots,j+2.
\]
Then $u_l = P(x_l)$, $l=j-1, \cdots j+2$. We use $\mathcal{R}_1$ to denote such reconstruction procedure,
    $$( u_{j-1}, u_{j}, u_{j+1}, u_{j+2}) = \mathcal{R}_1(\bar u_{j-1}, \bar u_{j}, \bar u_{j+1}, \bar u_{j+2}).$$
As a reference, the reconstruction formulas for $\mathcal{R}_1$ are provided below,
\begin{align}
& u_{j-1}=\frac{11}{12}\bar{u}_{j-1} + \frac{5}{24}\bar{u}_{j} -\frac{1}{6}\bar{u}_{j+1} + \frac{1}{24}\bar{u}_{j+2}, \nonumber \\
& u_j=-\frac{1}{24}\bar{u}_{j-1} + \frac{13}{12}\bar{u}_{j} -\frac{1}{24}\bar{u}_{j+1}, \nonumber\\
& u_{j+1}=-\frac{1}{24}\bar{u}_{j} + \frac{13}{12}\bar{u}_{j+1} - \frac{1}{24}\bar{u}_{j+2}, \nonumber\\
& u_{j+2}=\frac{1}{24}\bar{u}_{j-1} - \frac{1}{6}\bar{u}_{j} + \frac{5}{24}\bar{u}_{j+1} + \frac{11}{12}\bar{u}_{j+2}. \nonumber
\end{align}
\item Construct an interplant $Q(x)$ such that
    $$Q(x_l)=a(u_l),\ l=j-1,\cdots, j+2.$$
Then let
    $$\hat{H}_{j+\frac{1}{2}}^D=Q'(x)|_{x_{j+\frac{1}{2}}}.$$
Such procedure is denoted as
\[\hat{H}_{j+\frac{1}{2}}^D
=
\mathcal{R}_2(a(u_{j-1}), a(u_{j}), a(u_{j+1}), a(u_{j+2})).
\]
As a reference, we provide the formula for $\mathcal{R}_2$ below
\begin{align}
& \hat{H}_{j+\frac{1}{2}}^D = \frac{1}{24} a(u_{j-1}) - \frac{9}{8} a(u_j) +\frac{9}{8} a(u_{j+1}) - \frac{1}{24} a(u_{j+2}). \nonumber
\end{align}
\end{enumerate}
\begin{rem}
The reconstruction processes for $\mathcal{R}_1$ and $\mathcal{R}_2$ operators are designed such that $\hat{H}_{j+\frac{1}{2}}^D$ is reconstructed from a compact stencil with a given order of accuracy.  Because of such design, for the linear diffusion term $a(u)=u$, $\mathcal{R}_1$ and $\mathcal{R}_2$ can be combined and the strategy above turns out to be a classical fourth order central difference from a five-cell stencil with
 $$\hat{H}_{j+\frac{1}{2}}^D = \frac{1}{\Delta x}(\frac{1}{2} \bar{u}_{j-1} -\frac{15}{12} \bar{u}_j + \frac{15}{12}\bar{u}_{j+1} -\frac{1}{12}\bar{u}_{j+2}).$$
\end{rem}
If  each of $u_l$ ($l=j-1, \cdots j+2$) in Step 1 is reconstructed from symmetrical stencils (having the same number of cells from left and from right), the reconstruction of $\hat{H}_{j+\frac{1}{2}}^D$ will depend on a much wider stencil $\{u_{j-3}, \cdots u_{j+4}\}$.
Such non-compact way of reconstructing numerical fluxes for diffusion terms will introduce some numerical instabilities when approximating nonlinear diffusion terms in our numerical tests, whereas the proposed compact strategy does not encounter such difficulty.
%
%

We use the following third order total variation diminishing (TVD) RK method \cite{gottlieb2009high} for the time discretization of (\ref{1dscheme}), which reads
\begin{align}\label{rk}
&u^{(1)}=\bar u^n+\Delta t L(\bar u^n), \nonumber \\
&u^{(2)}=\bar u^n+\Delta t (\frac{1}{4}L(\bar u^n) + \frac{1}{4}L(u^{(1)})), \\
&\bar u^{n+1}=\bar u^n+\Delta t (\frac{1}{6} L(\bar u^n) + \frac{1}{6} L(u^{(1)}) + \frac{2}{3} L(u^{(2)})), \nonumber
\end{align}
where $L(\bar u^n)$ denotes the right hand side of equation (\ref{1dscheme}). Here $\bar u^n$ and $u^{(s)}$, $s = 1, 2$ denote the numerical solution of $u$ at time $t^n$ and corresponding RK stages.
The fully discretized scheme \eqref{rk} can be rewritten as
\begin{equation}\label{1dschemefully}
\bar u_j^{n+1}=\bar u_j^{n}-\lambda (\hat{H}_{j+\frac{1}{2}}^{rk} - \hat{H}_{j-\frac{1}{2}}^{rk})
\end{equation}
with $\lambda = \frac{\Delta t}{\Delta x}$ and
\[
\hat{H}_{j+\frac{1}{2}}^{rk}=\frac{1}{6}(\hat{H}_{j+\frac{1}{2}}^{C,n} - \hat{H}_{j+\frac{1}{2}}^{D,n}) + \frac{1}{6}(\hat{H}_{j+\frac{1}{2}}^{C,(1)} - \hat{H}_{j+\frac{1}{2}}^{D,(1)}) + \frac{2}{3}(\hat{H}_{j+\frac{1}{2}}^{C,(2)} - \hat{H}_{j+\frac{1}{2}}^{D,(2)}).
\]
Here $\hat{H}_{j+\frac{1}{2}}^{C,(s)},\ \hat{H}_{j+\frac{1}{2}}^{D,(s)}\ (s=1,2)$ are the numerical fluxes at the intermediate stages in the RK scheme (\ref{rk}).

It has been known that the numerical solutions from schemes with a first order monotone flux for the convection term together with a first order flux for the diffusion term satisfy the maximum principle, if the time step is small enough \cite{yzhang2012maximum}. However, if the numerical fluxes are of high order such as the one from the reconstruction process proposed above, the MPP property for the numerical solutions does not necessarily hold under the same time step constraint. Next we apply the parametrized flux limiters proposed in \cite{mpp_xqx} to the scheme (\ref{1dschemefully}) to preserve the discrete maximum principle (\ref{maxminD}).

We modify the numerical flux $\hat{H}_{j+\frac{1}{2}}^{rk}$ in equation (\ref{1dschemefully}) with
\begin{equation}\label{modified flux}
\tilde{H}_{j+\frac{1}{2}}^{rk}=\theta_{j+\frac{1}{2}} \hat{H}_{j+\frac{1}{2}}^{rk} + (1-\theta_{j+\frac{1}{2}}) \hat{h}_{j+\frac{1}{2}},
\end{equation}
by carefully seeking local parameters $\theta_{j+\frac{1}{2}}$, such that the numerical solutions enjoy the MPP property yet $\theta_{j+\frac12}$ is as close to $1$ as possible. In other words, $\tilde{H}_{j+\frac{1}{2}}^{rk}$ is as close to the original high order flux $\hat{H}_{j+\frac{1}{2}}^{rk}$ as possible. 
Here $\hat{h}_{j+\frac{1}{2}}$ denotes the first order flux for convection and diffusion terms, using which in the scheme \eqref{1dscheme} with a forward Euler time discretization guarantees the maximum principle of numerical solutions. For example, we can take
\[
\hat{h}_{j+\frac{1}{2}}=\hat{h}^C_{j+\frac{1}{2}} - \hat{h}^D_{j+\frac{1}{2}} =
\frac{1}{2}\big(f(\bar{u}_{j}) + \alpha \bar{u}_{j}\big) + \frac{1}{2}\big(f(\bar{u}_{j+1}) - \alpha \bar{u}_{j+1}\big)- \frac{a(\bar{u}_{j+1})-a(\bar{u}_j)}{\Delta x}
\]
with $\alpha = {\max}_{u_{m}\le u \le u_{M}} |f'(u)|$. The goal of the procedures outlined below is to adjust $\theta_{j+\frac12}$, so that with the modified flux $\tilde{H}_{j+\frac{1}{2}}^{rk}$, the numerical solutions satisfy the maximum principle, 
\begin{equation}
u_m \le \bar u_j^n - \lambda (\tilde{H}_{j+\frac{1}{2}}^{rk} - \tilde{H}_{j-\frac{1}{2}}^{rk}) \le u_M, \quad \forall j.
\end{equation}
Detailed procedures in decoupling the above inequalities have been intensively discussed in our previous work, e.g. \cite{mpp_xqx}. Below we only briefly describe the computational algorithm for the proposed limiter. 

Let $F_{j+\frac{1}{2}} \doteq  \hat{H}_{j+\frac{1}{2}}^{rk}-\hat{h}_{j+\frac{1}{2}}$ and
\[
\Gamma _j^M \doteq u_M - (\bar u_j^n - \lambda (\hat{h}_{j+\frac{1}{2}} - \hat{h}_{j-\frac{1}{2}})), \ \Gamma _j^m \doteq u_m - (\bar u_j^n - \lambda (\hat{h}_{j+\frac{1}{2}} - \hat{h}_{j-\frac{1}{2}})).
\]
The MPP property is satisfied with the modified flux (\ref{modified flux}) when the following inequalities are hold,
\beqa
&& \lambda \theta_{j-\frac{1}{2}} F_{j-\frac{1}{2}} - \lambda \theta_{j+\frac{1}{2}} F_{j+\frac{1}{2}}  -  \Gamma_j^M \le 0, \label{max} \\
&& \lambda \theta_{j-\frac{1}{2}} F_{j-\frac{1}{2}} - \lambda \theta_{j+\frac{1}{2}}F_{j+\frac{1}{2}}  -  \Gamma_j^m \ge 0.  \label{min}
\eeqa
We first consider the inequality \eqref{max}. We seek a local pair of numbers $(\Lambda _{-\frac{1}{2},I_j} ^M,  \Lambda _{+\frac{1}{2},I_j} ^M)$ such that (1) 
$\Lambda _{\pm\frac{1}{2},I_j} ^M\in[0, 1]$ and is as close to $1$ as possible, (2) for any
$\theta_{j-\frac{1}{2}} \in [0,\Lambda _{-\frac{1}{2},I_j} ^M], \  \theta_{j+\frac{1}{2}} \in [0,\Lambda _{+\frac{1}{2},I_j} ^M]$,
the inequality (\ref{max}) holds. The inequality \eqref{max} can be decoupled based on the following four different cases:
\begin{enumerate}[(a)]
\item  If $F_{j-\frac{1}{2}}\le 0$ and $F_{j+\frac{1}{2}} \ge 0$, then
         $(\Lambda _{-\frac{1}{2},I_j} ^M,  \Lambda _{+\frac{1}{2},I_j} ^M)=(1,1). $
\item  If $F_{j-\frac{1}{2}}\le 0$ and $F_{j+\frac{1}{2}} < 0$, then
         $(\Lambda _{-\frac{1}{2},I_j} ^M,  \Lambda _{+\frac{1}{2},I_j} ^M)=(1, \min(1, \frac{\Gamma_j^M}{-\lambda F_{j+\frac{1}{2}}})). $
\item If $F_{j-\frac{1}{2}}\ > 0$ and $F_{j+\frac{1}{2}} \ge 0$, then
         $(\Lambda _{-\frac{1}{2},I_j} ^M,  \Lambda _{+\frac{1}{2},I_j} ^M)=(\min(1, \frac{\Gamma_j^M}{\lambda F_{j-\frac{1}{2}}}), 1). $
\item If $F_{j-\frac{1}{2}}\ > 0$ and $F_{j+\frac{1}{2}} < 0$, then
         $$(\Lambda _{-\frac{1}{2},I_j} ^M,  \Lambda _{+\frac{1}{2},I_j} ^M)=(\min(1,\frac{\Gamma_j^M}{\lambda F_{j-\frac{1}{2}}-\lambda F_{j+\frac{1}{2}}}), \min(1,\frac{\Gamma_j^M}{\lambda F_{j-\frac{1}{2}}-\lambda F_{j+\frac{1}{2}}})). $$
\end{enumerate}
Similarly, we can find a local pair of numbers $(\Lambda _{-\frac{1}{2},I_j} ^m,  \Lambda _{+\frac{1}{2},I_j} ^m)$ such that for any
$$\theta_{j-\frac{1}{2}} \in [0,\Lambda _{-\frac{1}{2},I_j} ^m], \  \theta_{j+\frac{1}{2}} \in [0,\Lambda _{+\frac{1}{2},I_j} ^m]$$
(\ref{min}) holds. There are also four different cases:
\begin{enumerate}[(a)]
\item If $F_{j-\frac{1}{2}} \ge 0$ and $F_{j+\frac{1}{2}} < 0$, then
          $(\Lambda _{-\frac{1}{2},I_j} ^m,  \Lambda _{+\frac{1}{2},I_j} ^m)=(1,1).$
\item If $F_{j-\frac{1}{2}} \ge 0$ and $F_{j+\frac{1}{2}} > 0$, then
          $(\Lambda _{-\frac{1}{2},I_j} ^m,  \Lambda _{+\frac{1}{2},I_j} ^m)=(1,\min(1,\frac{\Gamma_j^m}{-\lambda F_{j+\frac{1}{2}}})).$
\item If $F_{j-\frac{1}{2}} < 0$ and $F_{j+\frac{1}{2}} < 0$, then
          $(\Lambda _{-\frac{1}{2},I_j} ^m,  \Lambda _{+\frac{1}{2},I_j} ^m)=(\min(1,\frac{\Gamma_j^m}{\lambda F_{j-\frac{1}{2}}}),1).$
\item  If $F_{j-\frac{1}{2}} < 0$ and $F_{j+\frac{1}{2}} \ge 0$, then
          \[(\Lambda _{-\frac{1}{2},I_j} ^m,  \Lambda _{+\frac{1}{2},I_j} ^m)=(\min(1,\frac{\Gamma_j^m}{\lambda F_{j-\frac{1}{2}}-\lambda F_{j+\frac{1}{2}}}), \min(1,\frac{\Gamma_j^m}{\lambda F_{j-\frac{1}{2}}-\lambda F_{j+\frac{1}{2}}})).\]
\end{enumerate}

\noindent Finally, the local limiter parameter $\theta_{j+\frac{1}{2}}$ at the cell boundary $x_{j+\frac{1}{2}}$ is defined as
\begin{equation}
\theta_{j+\frac{1}{2}} = min(\Lambda_{+\frac{1}{2},I_j}^M, \Lambda_{+\frac{1}{2},I_j}^m, \Lambda_{-\frac{1}{2},I_{j+1}}^M, \Lambda_{-\frac{1}{2},I_{j+1}}^m),
\end{equation}
so that the numerical solutions $\bar u_j^{n+1}$, $\forall j, n$  satisfy the maximum principle.


The extension of the FV RK scheme and the MPP flux limiter from 1D case to two dimensional (2D) convection-diffusion problems is straightforward. For example, we consider a 2D problem on a rectangular domain $[a, b] \times [c, d]$,
\beq
\label{eq: 2d}
u_t + f(u)_x + g(u)_y = a(u)_{xx} + b(u)_{yy}.
\eeq
Without loss of generality, we consider a set of uniform mesh
$$a=x_{\frac{1}{2}}<x_{\frac{3}{2}}<\cdots<x_{N-\frac{1}{2}}<x_{N_x+\frac{1}{2}}=b, \ \Delta x =\frac{b-a}{N_x},$$
$$c=y_{\frac{1}{2}}<y_{\frac{3}{2}}<\cdots<y_{N-\frac{1}{2}}<y_{N_y+\frac{1}{2}}=d, \ \Delta y =\frac{d-c}{N_y},$$
with $I_{i,j}=[x_{i-\frac{1}{2}},x_{i+\frac{1}{2}}]\times [y_{j-\frac{1}{2}},y_{j+\frac{1}{2}}]$. A semi-discrete FV discretization of \eqref{eq: 2d} gives
\begin{align}\label{2dscheme}
\frac{d}{dt} \bar{u}_{i,j} & + \frac{1}{\Delta x} (\hat{f}_{i+\frac{1}{2},j}-\hat{f}_{i-\frac{1}{2},j})+ \frac{1}{\Delta y} (\hat{g}_{i,j+\frac{1}{2}}-\hat{g}_{i,j-\frac{1}{2}}) \nonumber \\
&=\frac{1}{\Delta x} (\widehat{(a_x)}_{i+\frac{1}{2},j}-\widehat{(a_x)}_{i-\frac{1}{2},j}) +  \frac{1}{\Delta y} (\widehat{(b_y)}_{i,j+\frac{1}{2}}-\widehat{(b_y)}_{i,j-\frac{1}{2}}),
\end{align}
where $\bar{u}_{i,j}=\frac{1}{\Delta x \Delta y} \int \int_{I_{i,j}} u dxdy$ and $\hat{f}_{i+\frac{1}{2},j}=\frac{1}{\Delta y} \int_{y_{j-\frac{1}{2}}}^{y_{j+\frac{1}{2}}} f(x_{i+\frac{1}{2}},y)dy$ is the average of the flux over the right boundary of cell $I_{i,j}$. $\hat{g}_{i,j+\frac{1}{2}}$, $\widehat{(a_x)}_{i+\frac{1}{2},j}$, $\widehat{(b_y)}_{i,j+\frac{1}{2}}$ can be defined similarly. The flux $\hat{f}_{i+\frac{1}{2},j}$ is evaluated by applying the Gaussian quadrature rule for integration,
\begin{align}
&  \hat{f}_{i+\frac{1}{2},j} = \frac{1}{2} \underset{i_g}{\Sigma} \omega_{i_g} f(u_{i+\frac{1}{2},i_g}).
\end{align}
Here $\underset{i_g}{\Sigma}$ represents the summation over the Gaussian quadratures with $\omega_{i_g}$ being quadrature weights
and $u_{i+\frac{1}{2},i_g}$ is the approximated value to $u(x_{i+\frac{1}{2}},y_{i_g})$ with $y_{i_g}$ being the Gaussian quadrature points over $[y_{j-\frac{1}{2}},y_{j+\frac{1}{2}}]$. $u_{i+\frac{1}{2},i_g}$ can be reconstructed from $\{\bar{u}_{i,j}\}$ in the following two steps. Firstly, we reconstruct $\frac{1}{\Delta x} \int_{x_{i-\frac{1}{2}}}^{x_{i+\frac{1}{2}}} u(x,y_{i_g})dx$ from  $\{\bar{u}_{i,j}\}$. To do this, we construct a polynomial $Q(y)$ such that
\begin{equation}
\frac{1}{\Delta y} \int_{y_{j-\frac{1}{2}}}^{y_{j+\frac{1}{2}}} Q(y)dy =\frac{1}{\Delta x\Delta y} \int_{I_{i,j}} u(x,y)dx dy =\bar{u}_{i,j},
\end{equation}
with $j$ belongs to a reconstruction stencil in the $y$-direction as in the one-dimensional case. 
Then $Q(y_{i_g})$ is a high order approximation to $\frac{1}{\Delta x} \int_{x_{i-\frac{1}{2}}}^{x_{i+\frac{1}{2}}} u(x,y_{i_g})dx$.
We let $\mathcal{R}_{y}$ to denote such reconstruction process in $y$-direction.
Secondly, we construct a polynomial $P(x)$ such that
\begin{equation}
\frac{1}{\Delta x} \int_{x_{i-\frac{1}{2}}}^{x_{i+\frac{1}{2}}} P(x)dx = \frac{1}{\Delta x} \int_{x_{i-\frac{1}{2}}}^{x_{i+\frac{1}{2}}} u(x,y_{i_g})dx,
\end{equation}
with $i$ belongs to a reconstruction stencil in the $x$-direction as in the one-dimensional case. Then $u_{i+\frac{1}{2},i_g}=P(x_{i+\frac{1}{2}})$. 
Such 1D reconstruction process is denoted as $\mathcal{R}_{x}$.
The 2D reconstructing procedure can be summarized as the following flowchart 
\begin{equation}
\centering
\{ \bar{u}_{i,j}\} \overset{\mathcal{R}_y}{\longrightarrow}  \{\frac{1}{\Delta x} \int_{x_{i-\frac{1}{2}}}^{x_{i+\frac{1}{2}}} u(x,y_{i_g})dx \} \overset{\mathcal{R}_x}{\longrightarrow} \{u_{i+\frac{1}{2},i_g}\}.
\end{equation}
Detailed information on the 2D reconstruction procedure is similar to those described in \cite{Shu_book}.  The 2D MPP flux limiter is applied in a similar fashion as those in \cite{mpp_xuMD, jiang2013parametrized, mpp_xqx}. Thus details are omitted for brevity. 

\begin{rem}
The proposed generalization of the parametrized flux limiter to convection-diffusion problems is rather straightforward. In comparison, it is much more difficult to generalize the polynomial rescaling approach in \cite{zhang2010maximum} to schemes with higher than third order accuracy for convection diffusion problems. The approach there relies on rewriting the updated cell average as a convex combination of some local quantities within the range $[u_m, u_M]$; this is more difficult to achieve with the diffusion terms \cite{zhang2012maximumcd, yzhang2012maximum}. Moreover, the proposed flux limiter introduces very mild time step constraint to preserve both MPP and high order accuracy of the original FV RK scheme, see the next section for more discussions. 
\end{rem}

\section{Theoretical properties}
\label{sec3}
\setcounter{equation}{0}
\setcounter{figure}{0}
\setcounter{table}{0}

In this section, we provide accuracy analysis for the MPP flux 
limiter applied to the high order FV RK scheme solving pure convection problems.
Specifically, we will prove that the proposed parametrized flux limiter as in equation \eqref{modified flux} introduces a high order modification in space and time to the temporal integrated flux of the original scheme, assuming that the solution is smooth enough. A general proof on preservation of {\em arbitrary} high order accuracy will be provided for linear problems. Then by performing Taylor expansions around extrema, 
we prove that the modification from the proposed flux limiter is of at least third order, for FV RK schemes that are third order or higher in solving general nonlinear problems.

The entropy solution $u(x, t)$ to a scalar convection problem
\beq
u_t+f(u)_x=0, \quad u(x,0)=u_0(x).
\label{eq: adv}
\eeq 
satisfies 
\begin{eqnarray}
\label{eq: weak}
\frac{d}{dt} \int_{x_{j-\frac12}}^{x_{j+\frac12}} u(x,t) dx =f(u(x_{j+\frac12}, t))-f(u(x_{j-\frac12}, t)).
\end{eqnarray}
Integrating (\ref{eq: weak}) over the time period $[t^n, t^{n+1}]$, we have
\begin{eqnarray}
\label{eq: int}
\bar u_j(t^{n+1}) = \bar u_j(t^n)-\lambda (\check f_{j+\frac12} -\check f_{j-\frac12}),
\end{eqnarray}
where $\lambda=\Delta t/\Delta x$ and
\begin{equation}
\label{cellaverage}
\bar u_j(t)=\frac{1}{\Delta x} \int_{x_{j-1/2}}^{x_{j+1/2}} u(x, t) dx, \quad
\check f_{j-1/2} =\frac{1}{\Delta t} \int_{t^n}^{t^{n+1}} f(u(x_{j-1/2}, t)) dt. 
\end{equation}
The entropy solution satisfies the maximum principle in the form of
\begin{equation}
\label{IQL}
u_m\le\bar u_j(t^n)-\lambda (\check f_{j+\frac12} -\check f_{j-\frac12}) \le u_M.
\end{equation} 

For schemes with $(2k+1)^{th}$ order finite volume 
spatial discretization \eqref{1dschemefully} and $p^{th}$ order RK time discretization, we assume  
\beq 
|\check f_{j+\frac12}-\hat H^{rk}_{j+\frac12}|=\mathcal{O}(\Delta x^{2k+1} + \Delta t^p), \quad \forall j.
\label{assumption}
\eeq
Our analysis is in the sense of local truncation analysis assuming the difference between $\bar u_j(t^n)$ and $\bar u^n_j$ is of high order ($\mathcal{O}(\Delta x^{2k+1} + \Delta t^p)$). Under a corresponding $(2k+1)^{th}$ order reconstruction, the difference between the point values $u(x_j, t^n)$ and $u^n_j$ is also of high order. In the following, we use them interchangeably when such high order difference allows.

For the MPP flux limiter, we only consider the maximum value part as in equation \eqref{max}. 
The proof of equation \eqref{min} for the minimum value would be similar. We would like to prove that the difference 
between $\hat H^{rk}_{j+\f12}$ and $\tilde H^{rk}_{j+\f12}$ in \eqref{modified flux} is of high order in both space and time, that is
\beq
\label{trc}
|\hat H^{rk}_{j+\frac12}-\tilde H^{rk}_{j+\frac12}|=\mathcal{O}(\Delta x^{2k+1} + \Delta t^p), \quad \forall j.
\eeq
There are four cases of the maximum value part \eqref{max} outlined in the previous section. The estimate \eqref{trc} can be easily checked for
case (a) and (d) under the assumption \eqref{assumption} and the fact (\ref{IQL}), see arguments in \cite{mpp_xqx}. Below we will only discuss case (b), as the argument for case (c) would be similar. 

First we give the following lemma: 
\begin{lem}
\label{lem1}
Consider applying the MPP flux limiter \eqref{modified flux} for the maximum value part \eqref{max} with case (b), to prove \eqref{trc}, it suffices to have 
\beq
|u_M-(\bar u_j-\lambda(\check f_{j+\frac{1}{2}}-\hat h_{j-\frac12}))| = \mathcal{O}(\Delta x^{2k+1} + \Delta t^p),
\label{3rd}
\eeq
if $u_M-(\bar u_j-\lambda(\hat H^{rk}_{j+\frac{1}{2}}-\hat h_{j-\frac12})) < 0$.
\end{lem}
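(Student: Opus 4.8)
The plan is to turn the flux modification into an exact algebraic expression, subtract off the part already controlled by the consistency assumption \eqref{assumption}, and observe that what remains is precisely the quantity appearing in \eqref{3rd}. Since $F_{j+\frac12}=\hat H^{rk}_{j+\frac12}-\hat h_{j+\frac12}$ and $\tilde H^{rk}_{j+\frac12}=\theta_{j+\frac12}\hat H^{rk}_{j+\frac12}+(1-\theta_{j+\frac12})\hat h_{j+\frac12}$, we have $\hat H^{rk}_{j+\frac12}-\tilde H^{rk}_{j+\frac12}=(1-\theta_{j+\frac12})F_{j+\frac12}$, and $1-\theta_{j+\frac12}$ is the largest of the four numbers $1-\Lambda$ over the factors defining $\theta_{j+\frac12}$. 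In case (b) of the maximum part, $F_{j-\frac12}\le0$ and $F_{j+\frac12}<0$; then $\Lambda^m_{+\frac12,I_j}=1$, and since the left flux difference $F_{j+\frac12}$ of cell $I_{j+1}$ is negative that cell lies in case (a) or (b) of its own maximum part, where $\Lambda^M_{-\frac12,I_{j+1}}=1$, while the remaining factor $\Lambda^m_{-\frac12,I_{j+1}}$ belongs to the minimum-value analysis and is handled symmetrically. Hence, for the maximum part, \eqref{trc} reduces to bounding $(1-\Lambda^M_{+\frac12,I_j})\,|F_{j+\frac12}|$ with $\Lambda^M_{+\frac12,I_j}=\min\bigl(1,\,\Gamma^M_j/(-\lambda F_{j+\frac12})\bigr)$ and $\Gamma^M_j\ge 0$ (the latter because the first-order monotone update lies in $[u_m,u_M]$ under the CFL condition).

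Next I would record the key identity $\Gamma^M_j+\lambda F_{j+\frac12}=u_M-\bigl(\bar u_j-\lambda(\hat H^{rk}_{j+\frac12}-\hat h_{j-\frac12})\bigr)$, which follows directly from the definitions of $\Gamma^M_j$ and $F_{j+\frac12}$. If $\Gamma^M_j\ge -\lambda F_{j+\frac12}$ then $\Lambda^M_{+\frac12,I_j}=1$ and there is nothing to prove; by the identity this is exactly the complement of the lemma's hypothesis. In the remaining case $\Lambda^M_{+\frac12,I_j}=\Gamma^M_j/(-\lambda F_{j+\frac12})$, and using $|F_{j+\frac12}|=-F_{j+\frac12}$ together with the same identity one obtains, after cancellation,
\[
(1-\Lambda^M_{+\frac12,I_j})\,|F_{j+\frac12}|=\frac1\lambda\Bigl[\bigl(\bar u_j-\lambda(\hat H^{rk}_{j+\frac12}-\hat h_{j-\frac12})\bigr)-u_M\Bigr].
\]

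Finally, replacing $\hat H^{rk}_{j+\frac12}$ by $\check f_{j+\frac12}-(\check f_{j+\frac12}-\hat H^{rk}_{j+\frac12})$ splits the bracket into $-\bigl[u_M-(\bar u_j-\lambda(\check f_{j+\frac12}-\hat h_{j-\frac12}))\bigr]+\lambda(\check f_{j+\frac12}-\hat H^{rk}_{j+\frac12})$. The second summand is $\mathcal{O}(\Delta x^{2k+1}+\Delta t^p)$ by \eqref{assumption}, and the first is exactly the quantity controlled by the hypothesis \eqref{3rd}; dividing by $\lambda$ and using that $\lambda$ stays bounded above and below in the CFL/linear-stability regime (so $1/\lambda=\mathcal{O}(1)$) yields $(1-\Lambda^M_{+\frac12,I_j})\,|F_{j+\frac12}|=\mathcal{O}(\Delta x^{2k+1}+\Delta t^p)$, i.e.\ \eqref{trc}. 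The case (c) analogue and the minimum-value cases are entirely symmetric.

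The content of this lemma is purely algebraic, so the only mildly delicate points are the bookkeeping in the first step (verifying that in case (b) the other three limiter factors are either identically $1$ or covered by the symmetric minimum-value argument) and carrying along the harmless $1/\lambda$ factor. The genuine difficulty of the overall program is estimate \eqref{3rd} itself, which this lemma deliberately isolates: I expect it to be immediate for linear $f$ but to require the Taylor expansion around extrema for general nonlinear $f$, as announced in the introduction.
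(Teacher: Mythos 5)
Your proposal is correct and follows essentially the same route as the paper: both reduce the flux modification to $(1-\theta_{j+\frac12})F_{j+\frac12}$, use the identity $\Gamma^M_j+\lambda F_{j+\frac12}=u_M-\bigl(\bar u_j-\lambda(\hat H^{rk}_{j+\frac12}-\hat h_{j-\frac12})\bigr)$ to express this as $\frac{1}{-\lambda}$ times that quantity, and then split off $\check f_{j+\frac12}-\hat H^{rk}_{j+\frac12}$ via \eqref{assumption}. Your write-up is merely more explicit about the bookkeeping of the four limiter factors and the harmless $1/\lambda$ factor, both of which the paper leaves implicit.
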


\begin{proof}
For case (b), we are considering the case when
\begin{equation*}
\Lambda_{+\frac12, {I_j}}=\frac{\Gamma^M_j}{-\lambda F_{j+\frac12}} < 1,
\end{equation*}
which is equivalent to $u_M-(\bar u_j-\lambda(\hat{H}^{rk}_{j+\frac{1}{2}}-\hat h_{j-\frac12})) < 0$, and
\begin{equation*}
\qquad \tilde H^{rk}_{j+\frac12} -\hat H^{rk}_{j+\frac12} = \frac {\Gamma^M_j+\lambda F_{j+\frac12}}{-\lambda}=\frac{u_M-(\bar u_j -\lambda (\hat H^{rk}_{j+\frac12}-\hat h_{j-\frac12}))}{-\lambda},
\end{equation*}
which indicates that it suffices to have \eqref{3rd} to obtain (\ref{trc}) with the assumption (\ref{assumption}).
\end{proof}

\begin{thm}
\label{general_proof}
Assuming $f'(u) >0$ and $\lambda \max_u |f'(u)|\le 1$, we have 
\beq
\label{UL}
\bar u_j(t^n)-\lambda (\check f_{j+\frac12} - f(\bar u_{j-1}(t^n))) \le u_M
\eeq 
if $u(x, t)$ is the entropy solution to (\ref{eq: adv}) subject to initial data $u_0(x)$.
\end{thm}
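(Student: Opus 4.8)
The plan is to exhibit the left-hand side of \eqref{UL} as the \emph{exact} cell average at time $t^{n+1}$ of a carefully chosen auxiliary entropy solution whose initial data already lies in $[u_m,u_M]$, and then to invoke the maximum principle \eqref{maxmin} for that auxiliary solution. Concretely, set $\alpha=\max_{u_m\le u\le u_M}f'(u)$, which is positive by hypothesis and finite since $[u_m,u_M]$ is compact, and define the datum (at time $t^n$) $v_0(x)=\bar u_{j-1}(t^n)$ for $x<x_{j-\frac12}$ and $v_0(x)=u(x,t^n)$ for $x\ge x_{j-\frac12}$. Let $v(x,t)$, $t\ge t^n$, be the entropy solution of \eqref{eq: adv} with $v(\cdot,t^n)=v_0$. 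Since $u(\cdot,t^n)\in[u_m,u_M]$ by \eqref{maxmin} and $\bar u_{j-1}(t^n)$ is an average of $u(\cdot,t^n)$ over $I_{j-1}$, we have $v_0(x)\in[u_m,u_M]$ for every $x$; hence $v(x,t)\in[u_m,u_M]$ for all $x$ and all $t\ge t^n$, and in particular $\bar v_j(t^{n+1})\le u_M$.

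Next I would check that \eqref{eq: int}, which holds for every entropy solution, applied to $v$ on the cell $I_j$ reproduces exactly the left-hand side of \eqref{UL}. This reduces to three identities. First, $\bar v_j(t^n)=\bar u_j(t^n)$, which is immediate because $v_0\equiv u(\cdot,t^n)$ on $I_j$. Second, $\check f^{v}_{j+\frac12}=\check f_{j+\frac12}$: the data $v_0$ and $u(\cdot,t^n)$ coincide on $[x_{j-\frac12},\infty)$, so by finite speed of propagation (speed $\le\alpha$) the two solutions can differ only in $\{x<x_{j-\frac12}+\alpha(t-t^n)\}$, and since $\lambda\alpha\le1$ gives $\alpha\Delta t\le\Delta x$, the point $x_{j+\frac12}$ stays outside that region for a.e.\ $t\in[t^n,t^{n+1}]$; hence $v(x_{j+\frac12},t)=u(x_{j+\frac12},t)$ there. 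Third, $\check f^{v}_{j-\frac12}=f(\bar u_{j-1}(t^n))$: since $v_0\equiv\bar u_{j-1}(t^n)$ on $(-\infty,x_{j-\frac12})$ and $f'>0$, every wave issuing from $x_{j-\frac12}$ or from its right travels strictly rightward, so the constant state $\bar u_{j-1}(t^n)$ persists at $x=x_{j-\frac12}$ for $t\in(t^n,t^{n+1})$. Substituting these into $\bar v_j(t^{n+1})=\bar v_j(t^n)-\lambda(\check f^{v}_{j+\frac12}-\check f^{v}_{j-\frac12})$ yields $\bar v_j(t^{n+1})=\bar u_j(t^n)-\lambda(\check f_{j+\frac12}-f(\bar u_{j-1}(t^n)))$, and combining with $\bar v_j(t^{n+1})\le u_M$ from the first paragraph gives \eqref{UL}.

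I expect the main obstacle to be the rigorous justification of the third identity (and, more routinely, the second): one must invoke the ``no leftward propagation'' property of scalar conservation laws with strictly positive characteristic speed --- namely that the restriction of the entropy solution to $\{x\le x_{j-\frac12}\}$ is determined by the initial data on $(-\infty,x_{j-\frac12}]$ alone --- and be careful about one-sided traces of the merely $BV$ solution at the interface $x_{j-\frac12}$. This is classical (shock speeds equal $f'$ at an intermediate value by Rankine--Hugoniot, and rarefaction fans open only toward $x>x_{j-\frac12}$), but it is the step that genuinely uses $f'>0$ rather than monotonicity of some numerical flux, and it is also where the CFL number enters: $\lambda\alpha\le1$ is precisely what keeps the modification made to the left of $I_j$ from contaminating the outflow flux $\check f_{j+\frac12}$ within one time step. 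The minimum-value counterpart needed for \eqref{min} (case (c)) is obtained by the mirror construction, using that the same $v$ satisfies $\bar v_j(t^{n+1})\ge u_m$.
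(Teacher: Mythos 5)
Your proposal is correct and follows essentially the same route as the paper's own proof: both introduce the auxiliary entropy solution with data equal to $\bar u_{j-1}(t^n)$ left of $x_{j-\frac12}$ and $u(\cdot,t^n)$ to the right, and then use the three identities (equal cell average on $I_j$, unchanged outflow flux at $x_{j+\frac12}$ under the CFL condition, constant-state flux $f(\bar u_{j-1}(t^n))$ at $x_{j-\frac12}$ from $f'>0$) together with the maximum principle for the auxiliary solution. Your added care about one-sided traces and no-leftward-propagation only makes explicit what the paper leaves implicit.
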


\begin{proof}
Consider the problem (\ref{eq: adv}) with a different initial condition at time level $t^n$,
\begin{eqnarray}
\tilde u(x,t^n) = \begin{cases} u(x,t^n) \quad  & x\ge x_{j-\frac12},\\
\bar u_{j-1}(t^n) \quad & x< x_{j-\frac12},
\end{cases}
\end{eqnarray}
here $u(x,t^n)$ is the exact solution of (\ref{eq: adv}) at time level $t^n$. Assuming $\tilde u(x, t)$ is its entropy solution corresponding to the initial data $\tilde u(x, t^n)$, instantly we have
\begin{eqnarray}
\label{eq: eql1}
\bar {\tilde u}_j(t^n)=\bar u_j(t^n).
\end{eqnarray}
Since $f'(u) >0$, we have 
\begin{eqnarray}
\label{eq: eql2}
f(\tilde u(x_{j-\frac12}, t)) =f(\bar u_{j-1}(t^n)),
\end{eqnarray}
for $t\in [t^n, t^{n+1}]$. Since $\lambda \max_u |f'(u)|\le 1$, the characteristic starting from $x_{j-\frac12}$ would not hit the side $x_{j+\frac12}$, therefore 
\begin{eqnarray}
\label{eq: eql3}
\tilde u(x_{j+\frac12}, t)= u(x_{j+\frac12}, t)
\end{eqnarray}
for $t\in [t^n, t^{n+1}]$. Also since $\tilde u$ satisfies the maximum principle $\tilde u \le u_M$, we have
\begin{eqnarray*}
\bar {\tilde u}^{n+1}_j = \bar {\tilde u}^n_j -\lambda (\check {\tilde f}_{j+\frac12} - \check {\tilde f}_{j-\frac12}) \le u_M,
\end{eqnarray*}
where 
\begin{equation}
\label{cellaverage1}
\check {\tilde f}_{j-1/2} =\frac{1}{\Delta t} \int_{t^n}^{t^{n+1}} f(\tilde u(x_{j-1/2}, t)) dt. 
\end{equation}
Substituting (\ref{eq: eql1}), (\ref{eq: eql2}) and (\ref{eq: eql3}) into the above inequality, it follows that 
\begin{eqnarray*}
\bar {u}_j(t^n)-\lambda (\check {f}_{j+\frac12} - f(\bar u_{j-1}(t^n)) \le u_M.
\end{eqnarray*}
\end{proof}
For the case $f'(u)< 0$, we have the following 
\begin{thm}
\label{general_proof2}
Assuming $f'(u) < 0$ and $\lambda \max_u |f'(u)|\le 1$, we have 
\beq
\label{DL}
 \bar u_j (t^n)-\lambda (\check f_{j+\frac12} - f(\bar u_{j} (t^n))) \le u_M,
 \eeq 
if $u(x, t)$ is the entropy solution to problem (\ref{eq: adv}) subject to initial data $u_0(x)$.  
\end{thm}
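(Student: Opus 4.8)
The statement is the mirror image of Theorem~\ref{general_proof}, so the plan is to reuse the comparison argument from there, but with the auxiliary constant state inserted on $I_j$ and everything to its left rather than outside $I_j$ on its left. The reason for relocating the constant: since $f'(u)<0$, characteristics move from right to left, so the trace of the entropy solution at $x_{j-\frac12}$ is governed by values to the \emph{right} of $x_{j-\frac12}$. To turn $\check f_{j-\frac12}$ into the single value $f(\bar u_j(t^n))$ we must therefore make the data constant \emph{inside} $I_j$; keeping the cell average over $I_j$ unchanged then forces that constant to be $\bar u_j(t^n)$, which is exactly why the theorem features $f(\bar u_j(t^n))$ and not $f(\bar u_{j+1}(t^n))$ (and is consistent with the fact that, in the application to \eqref{3rd}, it is always the $x_{j-\frac12}$ flux that gets replaced).

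Concretely, I would introduce at time level $t^n$ the modified initial data
\[
\tilde u(x, t^n) = \begin{cases} \bar u_j(t^n), & x < x_{j+\frac12},\\ u(x, t^n), & x \ge x_{j+\frac12}, \end{cases}
\]
and let $\tilde u(x,t)$ be the corresponding entropy solution of \eqref{eq: adv}. Then I would verify, in order: (i)~$\bar{\tilde u}_j(t^n)=\bar u_j(t^n)$, because $\tilde u(\cdot,t^n)\equiv\bar u_j(t^n)$ on all of $I_j$; (ii)~$\check{\tilde f}_{j+\frac12}=\check f_{j+\frac12}$, because with $f'(u)<0$ the trace at $x_{j+\frac12}$ for $t\in[t^n,t^{n+1}]$ depends only on the data on $[x_{j+\frac12},\infty)$, where $\tilde u(\cdot,t^n)=u(\cdot,t^n)$; (iii)~$\check{\tilde f}_{j-\frac12}=f(\bar u_j(t^n))$, because the only disturbance able to reach $x_{j-\frac12}$ is the one generated by the artificial jump of $\tilde u(\cdot,t^n)$ at $x_{j+\frac12}$, which travels leftward with speed at most $\max_u|f'(u)|$, and hence under the CFL hypothesis $\lambda\max_u|f'(u)|\le 1$ cannot reach $x_{j-\frac12}$ before $t^{n+1}$, so $\tilde u(x_{j-\frac12},t)=\bar u_j(t^n)$ for a.e.\ $t\in[t^n,t^{n+1}]$; and (iv)~$\tilde u$ satisfies the maximum principle $\tilde u\le u_M$, since both $u(\cdot,t^n)\le u_M$ and $\bar u_j(t^n)\le u_M$ (a cell average of a function bounded by $u_M$). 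Combining (iv) with the exact identity $\bar{\tilde u}_j(t^{n+1})=\bar{\tilde u}_j(t^n)-\lambda(\check{\tilde f}_{j+\frac12}-\check{\tilde f}_{j-\frac12})\le u_M$ coming from \eqref{eq: int}, and substituting (i)--(iii), yields \eqref{DL}.

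The step I expect to be the main obstacle is making the propagation statements in (ii) and (iii) rigorous for entropy solutions that may contain shocks: precisely, showing that for almost every $t\in[t^n,t^{n+1}]$ the boundary trace of $f(\tilde u)$ at $x_{j-\frac12}$ equals $f(\bar u_j(t^n))$ and the trace at $x_{j+\frac12}$ coincides with that of $f(u)$. This is exactly where $\lambda\max_u|f'(u)|\le 1$ enters — to keep the left-moving wave emanating from the artificial jump at $x_{j+\frac12}$ from contaminating $x_{j-\frac12}$ within a single time step — and the treatment is identical in spirit to the one already used in the proof of Theorem~\ref{general_proof}. Beyond that, the argument is a routine reflection of the $f'(u)>0$ case and requires no new ideas.
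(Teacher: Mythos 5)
Your proposal is correct and follows exactly the paper's route: the paper's own (very terse) proof uses precisely the auxiliary data $\tilde{\tilde u}(x,t^n)=\bar u_j(t^n)$ for $x<x_{j+\frac12}$ and $u(x,t^n)$ for $x\ge x_{j+\frac12}$, and then invokes the same four facts you list (preserved cell average on $I_j$, unchanged right-boundary trace since characteristics move leftward, constant left-boundary flux because the jump at $x_{j+\frac12}$ cannot cross $I_j$ under the CFL condition, and the maximum principle for the auxiliary entropy solution). Your write-up simply fills in the details the paper leaves to the reader as ``similar'' to Theorem~\ref{general_proof}.
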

\begin{proof}
The proof is similar. The only difference is that in this case, we shall consider an auxiliary problem (\ref{eq: adv}) with initial data 
\begin{eqnarray}
\tilde {\tilde u}(x, t^n) = \begin{cases} u(x, t^n) \quad  & x\ge x_{j+\frac12},\\
\bar u_{j} (t^n) \quad & x< x_{j+\frac12}.
\end{cases}
\end{eqnarray}
 \end{proof}
 Theorem \ref{general_proof} and \ref{general_proof2} implies the first {\bf main result} 
 \begin{thm}
 \label{UW}
 For the cases stated in Theorem \ref{general_proof} and \ref{general_proof2}: $f'(u)>0$ or $f'(u)<0$, with $\lambda \max_u |f'(u)|\le 1$, the estimate $$|\hat H^{rk}_{j+\frac12}-\tilde H^{rk}_{j+\frac12}|=\mathcal{O}(\Delta x^{2k+1} + \Delta t^p), \quad \forall j$$  holds if equation $$|\check f_{j+\frac12}-\hat H^{rk}_{j+\frac12}|=\mathcal{O}(\Delta x^{2k+1} + \Delta t^p), \quad \forall j$$ holds, when $\hat h_{j-\frac12}$ is the first order Godunov flux for the modification in (\ref{modified flux}). 
 \end{thm}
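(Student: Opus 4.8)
\smallskip
\noindent\textbf{Proof proposal.}
The plan is to deduce Theorem~\ref{UW} directly from Theorems~\ref{general_proof}--\ref{general_proof2} and Lemma~\ref{lem1}. First I would reduce to a single case. As recalled just before Lemma~\ref{lem1}, estimate \eqref{trc} holds trivially in cases (a) and (d) of the maximum-value part \eqref{max} using assumption \eqref{assumption} and \eqref{IQL} (the argument is that of \cite{mpp_xqx}), case (c) is symmetric to case (b), and the minimum-value part \eqref{min} is handled in the same way. So it suffices to treat case (b) of \eqref{max}, where by Lemma~\ref{lem1} all that remains is to verify
\[
\bigl|u_M-\bigl(\bar u_j-\lambda(\check f_{j+\frac12}-\hat h_{j-\frac12})\bigr)\bigr|=\mathcal{O}(\Delta x^{2k+1}+\Delta t^p)
\]
under the case-(b) hypothesis $u_M-\bigl(\bar u_j-\lambda(\hat H^{rk}_{j+\frac12}-\hat h_{j-\frac12})\bigr)<0$.

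The second step is to make the first-order Godunov flux explicit. Since $f'$ keeps a fixed sign on $[u_m,u_M]$, $f$ is monotone there, so the Godunov flux at $x_{j-\frac12}$ obtained from the piecewise-constant (first-order) reconstruction is $\hat h_{j-\frac12}=f(\bar u_{j-1})$ when $f'>0$ and $\hat h_{j-\frac12}=f(\bar u_{j})$ when $f'<0$. With this identification, Theorem~\ref{general_proof} (for $f'>0$) and Theorem~\ref{general_proof2} (for $f'<0$) give, under $\lambda\max_u|f'(u)|\le 1$ and up to the $\mathcal{O}(\Delta x^{2k+1}+\Delta t^p)$ difference between $\bar u_j(t^n)$ and $\bar u^n_j$,
\[
u_M-\bigl(\bar u_j-\lambda(\check f_{j+\frac12}-\hat h_{j-\frac12})\bigr)\ \ge\ 0 .
\]

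The third step is a sandwich estimate: subtracting the case-(b) hypothesis from this nonnegativity and using $(\text{above})-(\text{hypothesis})=\lambda(\hat H^{rk}_{j+\frac12}-\check f_{j+\frac12})$ gives
\[
0\ \le\ u_M-\bigl(\bar u_j-\lambda(\check f_{j+\frac12}-\hat h_{j-\frac12})\bigr)\ <\ \lambda\bigl(\hat H^{rk}_{j+\frac12}-\check f_{j+\frac12}\bigr)\ \le\ \lambda\,\bigl|\hat H^{rk}_{j+\frac12}-\check f_{j+\frac12}\bigr|,
\]
which is $\mathcal{O}(\Delta x^{2k+1}+\Delta t^p)$ by assumption \eqref{assumption} since $\lambda$ is bounded under the CFL condition. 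This is exactly \eqref{3rd}, so Lemma~\ref{lem1} yields \eqref{trc}. For the minimum-value part \eqref{min} one repeats the argument with the lower-bound analogues of Theorems~\ref{general_proof}--\ref{general_proof2}, namely $u_m\le\bar u_j(t^n)-\lambda(\check f_{j+\frac12}-\hat h_{j-\frac12})$, which hold by the same auxiliary-Riemann-problem construction because the associated entropy solution $\tilde u$ obeys the full maximum principle $u_m\le\tilde u\le u_M$.

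Since everything is a short chain of implications from Theorems~\ref{general_proof}--\ref{general_proof2} and Lemma~\ref{lem1}, there is no serious conceptual obstacle; the one place to be careful is the bookkeeping, i.e. checking that all four decoupling cases together with both the maximum- and minimum-value parts are covered, that $\hat h_{j-\frac12}$ is identified with the correct upwind value on which Theorems~\ref{general_proof}--\ref{general_proof2} are stated, and that the orientation of the sandwich inequality (nonnegativity of the $\check f$ residual versus strict negativity of the $\hat H^{rk}$ residual) is correct.
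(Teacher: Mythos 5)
Your argument is correct and is exactly the route the paper takes: its one-line proof simply says to combine the earlier arguments (Lemma \ref{lem1} plus Theorems \ref{general_proof} and \ref{general_proof2}) after observing that the Godunov flux equals $f(\bar u_{j-1})$ when $f'>0$ and $f(\bar u_j)$ when $f'<0$, which is precisely your reduction-plus-sandwich. The only blemish is a sign slip in your intermediate identity, since $(\text{above})-(\text{hypothesis})=\lambda\bigl(\check f_{j+\frac12}-\hat H^{rk}_{j+\frac12}\bigr)$ rather than $\lambda\bigl(\hat H^{rk}_{j+\frac12}-\check f_{j+\frac12}\bigr)$; this does not affect the final bound by $\lambda\bigl|\hat H^{rk}_{j+\frac12}-\check f_{j+\frac12}\bigr|$ or the conclusion.
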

 \begin{proof}
 The theorem can be proved by combining earlier arguments in this section, observing that $\hat h_{j-\frac12} = f(\bar u^n_{j-1})$ if $f'(u)> 0$, otherwise $\hat h_{j-\frac12} = f(\bar u^n_{j})$.
 \end{proof}
 The conclusion from Theorem \ref{UW} is that the MPP flux limiters for high order FV RK scheme does not introduce extra CFL constraint to preserve the high order accuracy of the original scheme. In the linear advection case, Theorem \ref{UW} simply indicates that 
 \begin{rem}
 \label{DON}
The MPP flux limiters preserve high order accuracy under the CFL requirement $\lambda \max_u |f'(u)|\le 1$ for linear advection problems when high order numerical fluxes are limited to the first order upwind flux. Without much difficulty, we can generalize the results in Theorem \ref{general_proof}, \ref{general_proof2} to two dimensional linear advection problems. 
 \end{rem}
It is difficult to generalize the above approach to general convection-dominated diffusion problems. However, we believe this is one important step toward a complete proof. Below, by performing Taylor expansions around extrema, we provide a proof of \eqref{trc} with third order spatial and temporal accuracy ($k=1, p=3$) for a general nonlinear problem. We consider a first order monotone flux $\hat h_{j-\frac12}=\hat h(\bar u_{j-1}, \bar u_j)$ in the proposed parametrized flux limiting procedure \eqref{modified flux}. And we define 
\begin{equation}
L_{1,j} = \f{\hat h(\bar u_{j-1}, \bar u_j )-f(\bar u_{j-1})}{\bar u_{j}-\bar u_{j-1}}, \quad 
L_{2,j} = - \f{f(\bar u_{j})-\hat h(\bar u_{j-1}, \bar u_j )}{\bar u_{j}-\bar u_{j-1}},
\label{lipschitz}
\end{equation}
where $L_{1,j}$ and $L_{2,j}$ are two coefficients related to the monotonicity condition \cite{harten1983high}. Let $L=\max_j |L_{1,j}+L_{2,j}|$, we have
\begin{thm}
\label{thm: accuracy}
Consider a third order (or higher) finite volume RK discretization for a pure convection problem \eqref{eq: adv}, 
with a first order monotone flux $\hat h_{j-\frac12}=\hat h(\bar u_{j-1}, \bar u_j)$ in \eqref{modified flux}. The estimate \eqref{trc} holds with $k=1, p=3$ under the CFL condition $1-\lambda L \ge 0$.
\end{thm}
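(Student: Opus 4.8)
The plan is to reduce the estimate \eqref{trc} to the single scalar inequality \eqref{3rd} of Lemma \ref{lem1}, and then to establish \eqref{3rd} by a Taylor expansion argument that exploits the sign information coming from the assumed case. By Lemma \ref{lem1}, we are in case (b) of the maximum-value part, so we may assume $u_M - (\bar u_j - \lambda(\hat H^{rk}_{j+\frac12} - \hat h_{j-\frac12})) < 0$, and it suffices to show that $u_M - (\bar u_j - \lambda(\check f_{j+\frac12} - \hat h_{j-\frac12})) = \mathcal{O}(\Delta x^3 + \Delta t^3)$. Using \eqref{assumption} to replace $\hat H^{rk}_{j+\frac12}$ by $\check f_{j+\frac12}$ up to $\mathcal{O}(\Delta x^3+\Delta t^3)$, the hypothesis becomes $u_M - (\bar u_j - \lambda(\check f_{j+\frac12} - \hat h_{j-\frac12})) < \mathcal{O}(\Delta x^3+\Delta t^3)$; so the quantity we must bound is known to be below a high-order quantity, and the real content is to bound it from below by a matching high-order quantity. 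First I would write $\hat h_{j-\frac12} = \hat h(\bar u_{j-1},\bar u_j)$ and, using the definitions \eqref{lipschitz} of $L_{1,j}, L_{2,j}$, expand $\check f_{j+\frac12} - \hat h_{j-\frac12}$ in terms of the cell averages and their differences; the exact-solution identity \eqref{eq: int} and the maximum principle \eqref{IQL} will be the anchors.

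The key steps, in order: (i) invoke Lemma \ref{lem1} to reduce to proving \eqref{3rd} under the stated sign condition; (ii) rewrite $\bar u_j - \lambda(\check f_{j+\frac12} - \hat h_{j-\frac12})$ by adding and subtracting the fully-first-order monotone update $\bar u_j - \lambda(\hat h_{j+\frac12} - \hat h_{j-\frac12})$, which lies in $[u_m, u_M]$ under the CFL condition $1-\lambda L \ge 0$ by the standard monotone-scheme argument (this is where $L = \max_j|L_{1,j}+L_{2,j}|$ and the Harten-type monotonicity coefficients enter); (iii) Taylor-expand $\check f_{j+\frac12}$ about the relevant extremal value — since we are near a maximum, $u(x_{j+\frac12},t) - u_M$ and its spatial derivatives vanish to the appropriate order there, so $\check f_{j+\frac12} - f(u_M)$ is second order in the local smallness parameter, and likewise $\hat h_{j-\frac12} - f(u_M)$; (iv) combine these expansions with the sign hypothesis $u_M - (\bar u_j - \lambda(\check f_{j+\frac12} - \hat h_{j-\frac12})) < \mathcal{O}(\Delta x^3 + \Delta t^3)$ to pinch the quantity between two $\mathcal{O}(\Delta x^3 + \Delta t^3)$ bounds, using $k=1,p=3$ so that the spatial reconstruction and temporal integration errors are both third order; (v) conclude \eqref{3rd}, hence \eqref{trc}, and note the symmetric argument gives case (c) and the minimum-value counterpart.

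The main obstacle I expect is step (iii)–(iv): controlling the Taylor remainder uniformly near an extremum while simultaneously keeping track of the interaction between the spatial location of the extremum (which need not sit at a cell interface or cell center) and the temporal averaging in $\check f_{j+\frac12}$. One must argue that, because $\bar u_j$ is within $\mathcal{O}(\Delta x^3+\Delta t^3)$ of $u_M$ by the sign hypothesis combined with \eqref{IQL}, the solution is genuinely close to its extremal value throughout the relevant space-time slab, so that $u - u_M = \mathcal{O}(\Delta x^{3/2})$-type bounds on the derivatives propagate correctly; the bookkeeping of which terms are $\mathcal{O}(\Delta x^2)$, $\mathcal{O}(\Delta x^3)$, $\mathcal{O}(\lambda \Delta x^2) = \mathcal{O}(\Delta x^3/\Delta x)$ etc.\ is delicate, and getting the CFL-independence to survive requires that every $\lambda$-weighted term carries an extra power of $\Delta x$ from the flux difference. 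This is exactly the "tedious Taylor expansion" the introduction warns about, and it is also the reason the argument does not obviously extend past third order: the number of vanishing-derivative conditions one can exploit at an extremum is limited, so the remainder is only controlled to third order in general. The remaining steps (i), (ii), (v) are routine given the machinery already set up in the excerpt and in \cite{mpp_xqx}.
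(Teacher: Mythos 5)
Your reduction to \eqref{3rd} via Lemma \ref{lem1} and your observation that the case hypothesis supplies one side of the bound (so that only the one-sided estimate $\bar u_j-\lambda(\check f_{j+\frac12}-\hat h_{j-\frac12})\le u_M+\mathcal{O}(\Delta x^3+\Delta t^3)$ remains to be proved) are both correct and match the paper. The gap is that you never actually establish that remaining inequality. Your step (ii) --- adding and subtracting the first-order update $\bar u_j-\lambda(\hat h_{j+\frac12}-\hat h_{j-\frac12})\in[u_m,u_M]$ --- leaves behind the term $-\lambda(\check f_{j+\frac12}-\hat h_{j+\frac12})$, which near an extremum is only $\mathcal{O}(\Delta x^2)$, not $\mathcal{O}(\Delta x^3)$; and your step (iii), ``derivatives vanish at the maximum so everything is second order,'' likewise stops at an uncontrolled $\mathcal{O}(\Delta x^2)$ remainder. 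The entire content of the theorem is the \emph{sign} of that second-order term. The paper obtains it by an explicit computation: Gauss--Lobatto quadrature in time for $\check f_{j+\frac12}$, tracing each quadrature node back along characteristics to $t^n$ (which is how the local CFL number $\lambda_0=\lambda f'(u_M)$ enters), quadratic reconstruction of the traced-back values from $\bar u_{j-1},\bar u_j,\bar u_{j+1}$, Taylor expansion about $x_M$ with offset $z=(x_j-x_M)/\Delta x$, and the Harten decomposition \eqref{hupwind}--\eqref{hdownwind} of $\hat h_{j-\frac12}$. This yields $\bar u_j-\lambda(\check f_{j+\frac12}-\hat h_{j-\frac12})=u_M+\frac{u''_M}{12}\Delta x^2\,g(z,\lambda_0)+\mathcal{O}(\Delta x^3+\Delta t^3)$, and the proof then hinges on verifying $g(z,\lambda_0)\ge 0$ by minimizing explicit quadratics in $z\in[-\frac12,\frac12]$ (using $L_{i,j}\le 0$), so that $u''_M\le 0$ forces the second-order term to have the favorable sign. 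Nothing in your outline produces, or even identifies the need for, this non-negativity.

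A second omission: your proposal only treats the situation where the extremum is interior to $I_j$ with $u'_M=0$. The paper must separately handle a local maximum sitting at a cell interface, where $u'_{j-\frac12}<0$, and there the leading correction is \emph{first} order in $\Delta x$; the stated CFL condition $1-\lambda L\ge 0$ is used precisely in that case to show the coefficient $\frac12(-2\lambda_0+\lambda_0^2)+\sqrt{s_3}+\frac12(1+\lambda(L_{1,j}+L_{2,j}))$ is nonnegative, followed by a monotonicity argument forcing $u'_{j-\frac12}=\mathcal{O}(\Delta x)$ in the remaining sub-case. You invoke $1-\lambda L\ge 0$ only to say the first-order scheme is MPP, which is a different (and here insufficient) use of that hypothesis, and you do not address the interface-extremum case at all.
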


\begin{proof}
Using the earlier argument, we will only prove \eqref{3rd}, assuming $u_M-(\bar u_j-\lambda(\hat H^{rk}_{j+\frac{1}{2}}-\hat h_{j-\frac12})) < 0$. 
We mimic the proof for the finite difference scheme in \cite{mpp_xqx}.
First we use the 3-point Gauss Lobatto quadrature to approximate $\check f_{j+\f12}$, 
\begin{align}
\check f_{j+\f12}=\frac16 f(u(x_{j+\frac12},t^n+\Delta t))+\frac23 f((x_{j+\frac12},t^n+\frac{\Delta t}{2}))+\frac16 f((x_{j+\frac12},t^n))+\mathcal{O}(\Delta t^3). 
\label{glrule}
\end{align}
Following the characteristics, we get
\begin{align}
\check f_{j+\f12}=\frac16 f(u(x_{j+\frac{1}{2}}-\lambda_{1}\Delta x,t^n))+ \frac23 f(u(x_{j+\frac{1}{2}}-\lambda_{2}\Delta x,t^n )) + \frac16 f(u(x_{j+\frac{1}{2}},t^n))+\mathcal{O}(\Delta t^3),\label{glrule3}
\end{align}
where $\lambda_{1}$ and $\lambda_{2}$ can be determined from
\begin{eqnarray}
\lambda_{1}=\lambda f'(u(x_{j+\frac12}-\lambda_{1}\Delta x,t^n)), \quad
\lambda_{2}=\frac{\lambda}{2} f'(u(x_{j+\frac12}-\lambda_{2}\Delta x,t^n)).  \label{lamb}
\end{eqnarray}
For the finite volume method, $u(x^*,t^n)$ in (\ref{glrule3}) can be approximated by a second order polynomial reconstruction from $\bar u_{j-1}$, $\bar u_j$ and $\bar u_{j+1}$. Denoting $u_1=u(x_{j+\frac{1}{2}}-\lambda_{1}\Delta x,t^n)$, $u_2=u(x_{j+\frac{1}{2}}-\lambda_{2}\Delta x,t^n)$ and $u_3=u(x_{j+\frac{1}{2}},t^n)$, we have
\begin{subequations}
\label{3rdr}
\begin{align}
&u_1=\f16\left((5+6\lambda_1-6\lambda_1^2)\bar u_j+(-1+3\lambda_1^2)\bar u_{j-1}+(2-6\lambda_1+3\lambda_1^2)\bar u_{j+1}\right) +O(\Delta x^3), \\
&u_2=\f16\left((5+6\lambda_2-6\lambda_2^2)\bar u_j+(-1+3\lambda_2^2)\bar u_{j-1}+(2-6\lambda_2+3\lambda_1^2)\bar u_{j+1}\right) +O(\Delta x^3), \\
&u_3=\f16\left(5\bar u_j-\bar u_{j-1}+2\bar u_{j+1}\right) +O(\Delta x^3).
\end{align}
\end{subequations}
 

We prove (\ref{3rd}) case by case. We first consider the case $x_M \in I_j$, with $u_M=u(x_M)$, $u'_M=0$ and $u''_M\le 0$. We perform Taylor expansions around $x_M$,
\begin{subequations}
\label{tayloru}
\begin{align}
&\bar u_j=u_M+u'_M(x_j-x_M)+u''_M \left(\frac{(x_j-x_M)^2}{2}+\frac{\Delta x^2}{24}\right)+O(\Delta x^3), \label{tayloru0} \\
&\bar u_{j+1}=u_M+u'_M(x_j-x_M+\Delta x)+u''_M \left(\frac{(x_j-x_M+\Delta x)^2}{2}+\frac{\Delta x^2}{24}\right)+O(\Delta x^3), \label{taylorup} \\
&\bar u_{j-1}=u_M+u'_M(x_j-x_M)+u''_M \left(\frac{(x_j-x_M-\Delta x)^2}{2}+\frac{\Delta x^2}{24}\right)+O(\Delta x^3). \label{taylorum} 
\end{align}
\end{subequations}
Denoting $z=(x_j-x_M)/\Delta x$, the approximation in \eqref{3rdr} can be rewritten as
\begin{subequations}
\label{3rdr2}
\begin{align}
&u_1=u_M+u'_M\Delta x(\f12-\lambda_1+z)+u''_M\f{\Delta x^2}{2}(\f14-\lambda_1+\lambda_1^2+z-2\lambda_1z+z^2)+O(\Delta x^3), \\
&u_2=u_M+u'_M\Delta x(\f12-\lambda_2+z)+u''_M\f{\Delta x^2}{2}(\f14-\lambda_2+\lambda_2^2+z-2\lambda_2z+z^2)+O(\Delta x^3), \\
&u_3=u_M+u'_M\Delta x(\f12+z)+u''_M\f{\Delta x^2}{2}(\f14+z+z^2)+O(\Delta x^3).
\end{align}
\end{subequations}
Based on similar Taylor expansions of \eqref{tayloru}, for the flux function $f$,
from \eqref{tayloru} and \eqref{3rdr2}, we would have
\begin{subequations}
\label{taylorf}
\begin{align}
f(\bar u_j)&= f(u_M)+ f'(u_M) \Big(u'_M \Delta x z +u''_M\frac{\Delta x^2}{2}(\frac{1}{12}+z^2)\Big) \nonumber \\
&+\f12 f''(u_M)\Big(u'_M z \Delta x+u''_M\frac{\Delta x^2}{2}(\frac{1}{12}+z^2)\Big)^2+O(\Delta x^3), \label{taylorf0} \\
f(\bar u_{j-1})&= f(u_M)+ f'(u_M) \Big(u'_M \Delta x (z-1) +u''_M\frac{\Delta x^2}{2}(\frac{13}{12}-2z+z^2)\Big) \nonumber \\
&+\f12 f''(u_M)\Big(u'_M \Delta x (z-1) +u''_M\frac{\Delta x^2}{2}(\frac{13}{12}-2z+z^2)\Big)^2+O(\Delta x^3), \label{taylorfm} \\
f(u_1)&= f(u_M)+ f'(u_M) \Big(u'_M \Delta x (1/2-\lambda_1+z) +u''_M\frac{\Delta x^2}{2}(\frac{1}{4}-\lambda_1+\lambda_1^2+z-2 \lambda_1 z+z^2)\Big) \nonumber \\
&+\f12 f''(u_M)\Big(u'_M \Delta x (1/2-\lambda_1+z) +u''_M\frac{\Delta x^2}{2}(\frac{1}{4}-\lambda_1+\lambda_1^2+z-2 \lambda_1 z+z^2)\Big)^2+O(\Delta x^3), \label{taylorf1} \\
f(u_2)&= f(u_M)+ f'(u_M) \Big(u'_M \Delta x (1/2-\lambda_2+z) +u''_M\frac{\Delta x^2}{2}(\frac{1}{4}-\lambda_2+\lambda_2^2+z-2 \lambda_2 z+z^2)\Big) \nonumber \\
&+\f12 f''(u_M)\Big(u'_M \Delta x (1/2-\lambda_2+z) +u''_M\frac{\Delta x^2}{2}(\frac{1}{4}-\lambda_2+\lambda_2^2+z-2 \lambda_2 z+z^2)\Big)^2+O(\Delta x^3), \label{taylorf2} \\
f(u_3)&= f(u_M)+ f'(u_M) \Big(u'_M \Delta x (1/2+z) +u''_M\frac{\Delta x^2}{2}(\frac{1}{4}+z+z^2)\Big)\nonumber \\
&+\f12 f''(u_M)\Big(u'_M \Delta x (1/2+z) +u''_M\frac{\Delta x^2}{2}(\frac{1}{4}+z+z^2)\Big)^2+O(\Delta x^3). \label{taylorf3} 
\end{align}
\end{subequations}
Now denoting $\lambda_1 =\lambda_0+\eta_1 \Delta x+\mathcal{O}(\Delta x^2)$
and $\lambda_2=\frac{\lambda_0}{2}+\eta_2\Delta x+\mathcal{O}(\Delta x^2)$, where $\lambda_0=\lambda f'(u_M)$, we can determine $\eta_1$ and $\eta_2$ by substituting them into (\ref{lamb}) and we have
\begin{align*}
&\lambda_1=\lambda_0+f''(u_M)u'_M \lambda (z+\frac12-\lambda_0)\Delta x+\mathcal{O}(\Delta x^2), \nonumber \\
&\lambda_2=\frac{\lambda_0}{2}+f''(u_M)u'_M \frac{\lambda}{2} (z+\frac12-\frac{\lambda_0}{2})\Delta x+\mathcal{O}(\Delta x^2).
\end{align*}

For the first order monotone flux $\hat h_{j-\f12}=\hat h(\bar u_{j-1}, \bar u_j )$, it can be written as 
\beq
\hat h_{j-\f12}=f(\bar u_{j-1})+L_{1,j} (\bar u_{j}-\bar u_{j-1}), \quad L_{1,j} = \f{\hat h(\bar u_{j-1}, \bar u_j )-f(\bar u_{j-1})}{\bar u_{j}-\bar u_{j-1}},
\label{hupwind}
\eeq
where $f(\bar u_{j-1})=\hat h(\bar u_{j-1},\bar u_{j-1})$ due to consistence. $L_{1,j}$ is negative and bounded due to the monotonicity and Lipschitz continuous conditions. On the other hand, $\hat h_{j-\f12}$ 
can also be written as
\beq
\hat h_{j-\f12}=f(\bar u_{j})+L_{2,j} (\bar u_{j}-\bar u_{j-1}), \quad L_{2,j} = - \f{f(\bar u_{j})-\hat h(\bar u_{j-1}, \bar u_j )}{\bar u_{j}-\bar u_{j-1}},
\label{hdownwind}
\eeq
where $f(\bar u_{j})=\hat h(\bar u_{j},\bar u_{j})$, and $L_{2,j}$ is negative and bounded.

With above notations and $u'_M=0$, we now discuss the following two cases:


\bit
\item
If $f'(u_M)\ge 0$, we have $\lambda_0=\lambda f'(u_M)\in[0,1]$ since $\lambda \max_u |f'(u)| \le 1$. We take $\hat h_{j-\f12}$ as in \eqref{hupwind} and we have 
\beq
\bar u_j-\lambda\left(\check f_{j+\f12}-\hat h_{j-\frac12}\right)
=u_M+\frac{u''_M}{12}\Delta x^2 g(z,\lambda_0)+\mathcal{O}(\Delta x^3+\Delta t^3),
\label{star3n}
\eeq
where
\beq
g(z,\lambda_0)=g_1(z,\lambda_0)- 6\lambda L_{1,j} (1-2z),
\eeq
with
\beq
g_1(z,\lambda_0)=\f12+(5\lambda_0+3\lambda_0^2-2\lambda_0^3)+6(-3\lambda_0+\lambda_0^2)z+6z^2.
\eeq
$\lambda L_{1,j}(1-2z)\le 0$ for $z\in[-\f12, \f12]$ and $L_{1,j}\le 0$. The minimum value of function $g_1$ with respect to $z$ is
\beq
(g_1)_{min}=g_1(z,\lambda_0)\Big|_{z=-\frac12\lambda_0(\lambda_0-3)}=\f12+\frac{\lambda_0}{2}(\lambda_0-2)(\lambda_0-1)(5-3\lambda_0)\ge 0,
\eeq
so that $g(z,\lambda_0)\ge 0$. Since $u''_M\le 0$, from (\ref{star3n}) we obtain (\ref{3rd}).
\item
If $f'(u_M)<0$, we have $\lambda_0\in[-1,0]$. We take $\hat h_{j-\f12}$ in \eqref{hdownwind}, similarly we have (\ref{star3n}) and 
\beq
g(z,\lambda_0)=g_2(z,\lambda_0)-6\lambda L_{2,j} (1-2z),
\eeq
with
\beq
g_2(z,\lambda_0)=\f12+(-\lambda_0+3\lambda_0^2-2\lambda_0^3)+6(-\lambda_0+\lambda_0^2)z+6z^2.
\eeq
$\lambda L_{2,j}(1-2z)\le 0$ for $z\in[-\f12, \f12]$ and $L_{2,j}\le 0$. The minimum value of $g_2$ with respect to $z$ is
\beq
(g_2)_{min}=g_2(z,\lambda_0)\Big|_{z=-\frac12\lambda_0(\lambda_0-1)}
=\f12+\frac{\lambda_0}{2}(\lambda_0+1)(\lambda_0-1)(2-3\lambda_0)\ge 0,
\eeq
that is $g(z,\lambda_0) \ge 0$. Since $u''_M\le 0$, from (\ref{star3n}) we also obtain (\ref{3rd}).
\eit

Now if $x_M \notin I_j$, however there is a local maximum point $x^{loc}_M$ inside the cell of $I_j$, the above analysis still holds. We then consider that $u(x)$ reaches its local maximum $u^{loc}_M$ over $I_j$ at $x^{loc}_M=x_{j-\frac12}$, we have $u'_{j-\frac12}<0$. We take $\hat h_{j-\f12}$ as an average of \eqref{hupwind} and \eqref{hdownwind}. From the Taylor expansions in (\ref{taylorf}), following the same procedure as above, with $z=(x_j-x^{loc}_M)/\Delta x=(x_j-x_{j-\frac12})/\Delta x=1/2$, we have
\begin{align}
\bar u_j-\lambda\left(\check f_{j+\f12}-\hat h_{j-\frac12}\right) 
=u_{j-\frac12}+u'_{j-\frac12} \Delta x s_1 + (u'_{j-\frac12})^2 \Delta x^2 s_2 +u''_{j-\frac12}\frac{\Delta x^2}{2}s_3+\mathcal{O}(\Delta x^3+\Delta t^3),
\label{star3}
\end{align}
where
\begin{align*}
& s_1=\frac12(-2\lambda_0+\lambda_0^2)+\frac{1}{2}(1+\lambda(L_{1,j}+L_{2,j})), \\
& s_2=-f''(u_{j-\frac12})\frac{\lambda }{8}(3-4\lambda_0+4\lambda_0^2),\qquad 
  s_3=\frac{1}{3}(1-2\lambda_0+3\lambda_0^2-\lambda_0^3). 
\end{align*}
(\ref{star3}) can be rewritten as
\begin{align}
\bar u_j-\lambda\left(\check f_{j+\f12}-\hat h_{j-\frac12}\right)
=& u(x_{j-\frac12}-\sqrt{s_3}\Delta x)+u'_{j-\frac12}\Delta x \big(\frac12(-2\lambda_0+\lambda_0^2)+\sqrt{s_3}
\nonumber \\
+&\f12(1+\lambda(L_{1,j}+L_{2,j}))\big)+(u'_{j-\frac12})^2\Delta x^2 s_2 +\mathcal{O}(\Delta x^3+\Delta t^3).
\label{star31}
\end{align}
It is easy to check that $s_3 > 0$ and $\frac12(-2\lambda_0+\lambda_0^2)+\sqrt{s_3}>0$ for $\lambda_0=\lambda f'(u_M)\in[-1,1]$. From the CFL condition $1+\lambda (L_{1,j}+L_{2,j}) \ge 1-\lambda L \ge 0$, 
we obtain $u'_{j-\frac12}\Delta x \big(\frac12(-2\lambda_0+\lambda_0^2)+\sqrt{s_3}+\f12(1+\lambda(L_{1,j}+L_{2,j}))\big)\le 0$ since $u'_{j-\frac12}<0$.

Now to prove \eqref{3rd}, it is sufficient to show $u(x_{j-\frac12}-\sqrt{s_3}\Delta x)+\Delta x^2 (u'_{j-\frac12})^2 s_2\le u_M$ or $u'_{j-\frac12} = \mathcal{O}(\Delta x)$. If $[x_{j-\frac12}-\sqrt{s_3}\Delta x-\Delta x, x_{j-\frac12}-\sqrt{s_3}\Delta x]$ is not a monotone region, there is a point $x^{\#,1}$ in this region, such that $u'(x^{\#,1})=0$. Similarly,
if $[x_{j-\frac12}-\sqrt{s_3}\Delta x-\Delta x, x_{j-\frac12}-\sqrt{s_3}\Delta x]$ is a monotone increasing region, since $u'_{j-\frac12}<0$, there is one point $x^{\#,2}$ in $[x_{j-\frac12}-\sqrt{s_3}\Delta x, x_{j-\frac12}]$, such that $u'(x^{\#,2})=0$. For these two cases, $u'_{j-\frac12}=\mathcal{O}(\Delta x)$.
We then focus on the case when $[x_{j-\frac12}-\sqrt{s_3}\Delta x-\Delta x, x_{j-\frac12}-\sqrt{s_3}\Delta x]$ is a monotone decreasing region. We assume
\[
u(x_{j-\frac12}-\sqrt{s_3}\Delta x)+ c \Delta x^2 >u_M
\]
where $c=|(u'_{j-\frac12})^2 s_2|$. Since
\[
u(x_{j-\frac12}-\sqrt{s_3}\Delta x)=u(x_{j-\frac12}-\sqrt{s_3}\Delta x-\Delta x)+u'(x^{\#,3}) \Delta x,
\]
where $u'(x^{\#,3})<0$, we have
\[
u'(x^{\#,3}) \Delta x +c\Delta x^2>0,
\]
which implies $|u'(x^{\#,3})|\le c \Delta x$, therefore, $u'_{j-\frac12} =\mathcal{O}(\Delta x)$.

$x^{loc}_M=x_{j+\frac{1}{2}}$ with $u'_{j+\f12} \ge 0$ can be proved similarly. Combining the above discussion, (\ref{3rd}) is proved.
\end{proof}
Therefore, for the general nonlinear convection problem, the MPP flux limiters preserve the third order accuracy of the original FV RK scheme without extra CFL constraint. 

\begin{rem}
The above proof relies on characteristic tracing. It is difficult to directly generalize such approach to the convection-diffusion problem. On the other hand, similar strategy as that used in \cite{jiang2013parametrized} by using a Lax-Wendroff strategy, i.e. transforming temporal derivatives into spatial derivatives by repeating using PDEs and its differentiation versions, can be directly applied here. A similar conclusion can be obtained that {the MPP flux limiters preserve the third order accuracy of the original FV RK scheme for the convection dominated diffusion equation without extra CFL constraint}. To save some space, we will not repeat the algebraically tedious details here.
\end{rem}

\begin{rem}
It is technically difficult to generalize the proof in Theorem~\ref{thm: accuracy} to higher than third order, especially with the use of general monotone fluxes, for example, global Lax-Friedrich flux 
\beq
\label{eq: gLxf}
\hat h_{j-\frac12}=\hat h(\bar u_{j-1}, \bar u_j)=\frac12\big(f(\bar u_j)+f(\bar u_{j-1})-\alpha(\bar u_j-\bar u_{j-1})\big), \quad \alpha=\max_{u}|f'(u)|.
\eeq
On the other hand, the use of the global Lax-Friedrich flux with an extra large $\alpha$ is not unusual; yet it is quite involved to theoretically or numerically investigate such issue in a nonlinear system. Instead, we use a monotone but over-diffusive flux with 
\beq
\label{eq: over_diff}
\hat{h}_{j+\frac12} = \f12 \big((1+\alpha) \bar{u}_j + (1- \alpha) \bar{u}_{j+1}\big), \quad \alpha>\max_{u}|f'(u)|=1, 
\eeq
for a linear advection equation $u_t + u_x = 0$ with a set of carefully chosen initial conditions. Such scenario is set up to mimic the use of global Lax-Friedrich flux with an extra large $\alpha$ for general nonlinear systems. In Table~\ref{tab301}-\ref{tab303} below, we present the accuracy test for using the parametrized flux limiter with an over-diffusive first order monotone flux \eqref{eq: over_diff} with $\alpha=1.2$ on a linear 5th, 7th and 9th order FV RK schemes, which denoted to be ``FVRK5'', ``FVRK7'', ``FVRK9'' respectively. A mild CFL constraint around $0.7$ with time step $\Delta t=CFL \Delta x/\alpha$ is observed to be sufficient to maintain the high order accuracy of the underlying scheme with the MPP flux limiter. 
\end{rem}

\begin{table}[ht]\footnotesize
\centering
  \begin{tabular}{|c||c||c|c|c|c|c||c|c|}
    \hline
$CFL$& &   mesh &  $L^1$ error  & order  & $L^\infty$ error & order  & Umin    & Umax   \\ \hline
\multirow{14}{*}{$0.9$}
&\multirow{7}{*}{Non-}
  &    20 &     1.29E-02 &       --&     2.00E-02 &       -- &  -0.013805229 &   0.960012218  \\  \cline{3-9}
&\multirow{7}{*}{MPP}
  &    40 &     5.62E-04 &     4.52&     9.27E-04 &     4.43 &  -0.000670411 &   0.988524452  \\  \cline{3-9}
& &    80 &     1.87E-05 &     4.91&     3.13E-05 &     4.89 &  -0.000025527 &   0.998060523  \\  \cline{3-9}
& &   160 &     5.96E-07 &     4.97&     9.94E-07 &     4.98 &  -0.000000471 &   0.999076363  \\  \cline{3-9}
& &   320 &     1.87E-08 &     4.99&     3.12E-08 &     4.99 &  -0.000000025 &   0.999931894  \\  \cline{3-9}
& &   640 &     5.85E-10 &     5.00&     9.76E-10 &     5.00 &  -0.000000001 &   0.999980112  \\  \cline{3-9}
& &  1280 &     1.83E-11 &     5.00&     3.05E-11 &     5.00 &   0.000000000 &   0.999992161  \\  \cline{2-9}
&\multirow{7}{*}{MPP}
  &    20 &     9.97E-03 &       --&     1.82E-02 &       -- &   0.000000000 &   0.960132209  \\  \cline{3-9}
& &    40 &     5.52E-04 &     4.18&     1.31E-03 &     3.80 &   0.000000000 &   0.988525623  \\  \cline{3-9}
& &    80 &     1.89E-05 &     4.87&     4.62E-05 &     4.83 &   0.000000000 &   0.998060523  \\  \cline{3-9}
& &   160 &     6.04E-07 &     4.96&     2.01E-06 &     4.52 &   0.000000325 &   0.999076363  \\  \cline{3-9}
& &   320 &     1.91E-08 &     4.98&     7.25E-08 &     4.79 &   0.000000010 &   0.999931894  \\  \cline{3-9}
& &   640 &     6.04E-10 &     4.99&     2.95E-09 &     4.62 &   0.000000001 &   0.999980112  \\  \cline{3-9}
& &  1280 &     1.90E-11 &     4.99&     1.33E-10 &     4.47 &   0.000000000 &   0.999992161  \\  \hline
\multirow{14}{*}{$0.7$}
&\multirow{7}{*}{Non-}
  &    20 &     1.30E-02 &       --&     2.01E-02 &       -- &  -0.014015296 &   0.959761206  \\  \cline{3-9}
&\multirow{7}{*}{MPP} 
  &    40 &     5.66E-04 &     4.52&     9.35E-04 &     4.43 &  -0.000680048 &   0.988513480  \\  \cline{3-9}
& &    80 &     1.89E-05 &     4.90&     3.17E-05 &     4.88 &  -0.000025848 &   0.998060157  \\  \cline{3-9}
& &   160 &     6.03E-07 &     4.97&     1.01E-06 &     4.98 &  -0.000000482 &   0.999076351  \\  \cline{3-9}
& &   320 &     1.89E-08 &     4.99&     3.16E-08 &     4.99 &  -0.000000026 &   0.999931893  \\  \cline{3-9}
& &   640 &     5.92E-10 &     5.00&     9.87E-10 &     5.00 &  -0.000000001 &   0.999980112  \\  \cline{3-9}
& &  1280 &     1.85E-11 &     5.00&     3.09E-11 &     5.00 &   0.000000000 &   0.999992161  \\  \cline{2-9}
&\multirow{7}{*}{MPP}
  &    20 &     9.95E-03 &       --&     1.81E-02 &       -- &   0.000000000 &   0.959688278  \\  \cline{3-9}
& &    40 &     5.55E-04 &     4.16&     1.40E-03 &     3.70 &   0.000000000 &   0.988514505  \\  \cline{3-9}
& &    80 &     1.91E-05 &     4.86&     4.90E-05 &     4.84 &   0.000000000 &   0.998060157  \\  \cline{3-9}
& &   160 &     6.09E-07 &     4.97&     1.86E-06 &     4.72 &   0.000000000 &   0.999076351  \\  \cline{3-9}
& &   320 &     1.91E-08 &     5.00&     6.03E-08 &     4.94 &   0.000000002 &   0.999931893  \\  \cline{3-9}
& &   640 &     5.95E-10 &     5.00&     1.91E-09 &     4.98 &   0.000000000 &   0.999980112  \\  \cline{3-9}
& &  1280 &     1.85E-11 &     5.00&     5.61E-11 &     5.09 &   0.000000000 &   0.999992161  \\  \hline
  \end{tabular}
\caption{$L^1$ and $L^\infty$ errors and orders for $u_t+u_x=0$ with initial 
condition $u(x,0)=\sin^4(x)$. $T=1$. The over-diffusive global Lax-Friedrichs flux \eqref{eq: over_diff} is used 
with $\alpha=1.2$. FVRK5. }
\label{tab301}
\end{table}

\begin{table}[ht]\footnotesize
\centering
  \begin{tabular}{|c||c||c|c|c|c|c||c|c|}
    \hline
$CFL$& &    mesh &  $L^1$ error  & order  & $L^\infty$ error & order  & Umin    & Umax   \\ \hline
\multirow{14}{*}{$0.9$}
&\multirow{6}{*}{Non-}
  &   20 &     4.13E-03 &       --&     6.38E-03 &       -- &  -0.004489835 &   0.972363581  \\  \cline{3-9}
&\multirow{6}{*}{MPP} 
  &   40 &     4.69E-05 &     6.46&     7.37E-05 &     6.44 &  -0.000005603 &   0.989301523  \\  \cline{3-9}
& &   80 &     3.99E-07 &     6.88&     6.38E-07 &     6.85 &   0.000001412 &   0.998091183  \\  \cline{3-9}
& &  160 &     3.20E-09 &     6.96&     5.10E-09 &     6.97 &   0.000000392 &   0.999077344  \\  \cline{3-9}
& &  320 &     2.51E-11 &     6.99&     4.01E-11 &     6.99 &   0.000000002 &   0.999931925  \\  \cline{3-9}
& &  640 &     1.97E-13 &     7.00&     3.14E-13 &     6.99 &   0.000000000 &   0.999980113  \\  \cline{2-9}
&\multirow{6}{*}{MPP}
  &   20 &     3.60E-03 &       --&     6.39E-03 &       -- &   0.000517069 &   0.972406897  \\  \cline{3-9}
& &   40 &     4.78E-05 &     6.23&     1.04E-04 &     5.94 &   0.000064524 &   0.989302277  \\  \cline{3-9}
& &   80 &     6.29E-07 &     6.25&     2.95E-06 &     5.15 &   0.000003451 &   0.998091182  \\  \cline{3-9}
& &  160 &     1.42E-08 &     5.47&     2.09E-07 &     3.82 &   0.000000602 &   0.999077344  \\  \cline{3-9}
& &  320 &     4.87E-10 &     4.87&     1.44E-08 &     3.86 &   0.000000012 &   0.999931925  \\  \cline{3-9}
& &  640 &     1.78E-11 &     4.78&     1.01E-09 &     3.83 &   0.000000001 &   0.999980113  \\  \hline
\multirow{14}{*}{$0.7$}
&\multirow{6}{*}{Non-}
  &   20 &     4.12E-03 &       --&     6.38E-03 &       -- &  -0.004485289 &   0.972368315  \\  \cline{3-9}
&\multirow{6}{*}{MPP}
  &   40 &     4.69E-05 &     6.46&     7.37E-05 &     6.44 &  -0.000005556 &   0.989301572  \\  \cline{3-9}
& &   80 &     3.98E-07 &     6.88&     6.38E-07 &     6.85 &   0.000001412 &   0.998091183  \\  \cline{3-9}
& &  160 &     3.19E-09 &     6.96&     5.10E-09 &     6.97 &   0.000000392 &   0.999077344  \\  \cline{3-9}
& &  320 &     2.51E-11 &     6.99&     4.00E-11 &     6.99 &   0.000000002 &   0.999931925  \\  \cline{3-9}
& &  640 &     1.96E-13 &     7.00&     3.14E-13 &     7.00 &   0.000000000 &   0.999980113  \\  \cline{2-9}
&\multirow{6}{*}{MPP}
  &   20 &     3.62E-03 &       --&     6.59E-03 &       -- &   0.000515735 &   0.972263646  \\  \cline{3-9}
& &   40 &     4.65E-05 &     6.28&     8.94E-05 &     6.20 &   0.000054894 &   0.989301394  \\  \cline{3-9}
& &   80 &     3.98E-07 &     6.87&     6.38E-07 &     7.13 &   0.000001412 &   0.998091183  \\  \cline{3-9}
& &  160 &     3.19E-09 &     6.96&     5.10E-09 &     6.97 &   0.000000392 &   0.999077344  \\  \cline{3-9}
& &  320 &     2.51E-11 &     6.99&     4.00E-11 &     6.99 &   0.000000002 &   0.999931925  \\  \cline{3-9}
& &  640 &     1.96E-13 &     7.00&     3.14E-13 &     7.00 &   0.000000000 &   0.999980113  \\  \hline
  \end{tabular}
\caption{$L^1$ and $L^\infty$ errors and orders for $u_t+u_x=0$ with initial 
condition $u(x,0)=\sin^4(x)$. $T=1$. The over-diffusive global Lax-Friedrichs flux \eqref{eq: over_diff} is used 
with $\alpha=1.2$. FVRK7. }
\label{tab302}
\end{table}

\begin{table}[ht]\footnotesize
\centering
  \begin{tabular}{|c||c||c|c|c|c|c||c|c|}
    \hline
$CFL$& &    mesh &  $L^1$ error  & order  & $L^\infty$ error & order  & Umin    & Umax   \\ \hline
\multirow{14}{*}{$0.9$}
&\multirow{5}{*}{Non-}
  &   20 &     1.29E-03 &       --&     2.00E-03 &       -- &  -0.001216056 &   0.975890071  \\  \cline{3-9}
&\multirow{5}{*}{MPP}
  &   40 &     3.99E-06 &     8.34&     6.19E-06 &     8.34 &   0.000053321 &   0.989362841  \\  \cline{3-9}
& &   80 &     8.67E-09 &     8.85&     1.37E-08 &     8.82 &   0.000002016 &   0.998091807  \\  \cline{3-9}
& &  160 &     1.75E-11 &     8.95&     2.76E-11 &     8.96 &   0.000000397 &   0.999077349  \\  \cline{3-9}
& &  320 &     3.44E-14 &     8.99&     5.51E-14 &     8.97 &   0.000000002 &   0.999931925  \\  \cline{2-9}
&\multirow{5}{*}{MPP}
  &   20 &     1.20E-03 &       --&     2.37E-03 &       -- &   0.000393260 &   0.975868904  \\  \cline{3-9}
& &   40 &     8.91E-06 &     7.08&     3.54E-05 &     6.06 &   0.000092174 &   0.989363425  \\  \cline{3-9}
& &   80 &     2.90E-07 &     4.94&     2.72E-06 &     3.70 &   0.000003586 &   0.998091812  \\  \cline{3-9}
& &  160 &     1.15E-08 &     4.65&     2.02E-07 &     3.75 &   0.000000600 &   0.999077349  \\  \cline{3-9}
& &  320 &     4.32E-10 &     4.74&     1.30E-08 &     3.96 &   0.000000013 &   0.999931925  \\  \hline
\multirow{14}{*}{$0.7$}
&\multirow{5}{*}{Non-}
  &   20 &     1.29E-03 &       --&     2.00E-03 &       -- &  -0.001216106 &   0.975890020  \\  \cline{3-9}
&\multirow{5}{*}{MPP}
  &   40 &     3.99E-06 &     8.34&     6.19E-06 &     8.34 &   0.000053321 &   0.989362841  \\  \cline{3-9}
& &   80 &     8.67E-09 &     8.85&     1.37E-08 &     8.82 &   0.000002016 &   0.998091807  \\  \cline{3-9}
& &  160 &     1.75E-11 &     8.95&     2.76E-11 &     8.96 &   0.000000397 &   0.999077349  \\  \cline{3-9}
& &  320 &     3.44E-14 &     8.99&     5.60E-14 &     8.94 &   0.000000002 &   0.999931925  \\  \cline{2-9}
&\multirow{5}{*}{MPP}
  &   20 &     1.20E-03 &       --&     2.47E-03 &       -- &   0.000419926 &   0.975868183  \\  \cline{3-9}
& &   40 &     3.99E-06 &     8.23&     6.19E-06 &     8.64 &   0.000053321 &   0.989362841  \\  \cline{3-9}
& &   80 &     8.67E-09 &     8.85&     1.37E-08 &     8.82 &   0.000002016 &   0.998091807  \\  \cline{3-9}
& &  160 &     1.75E-11 &     8.95&     2.76E-11 &     8.96 &   0.000000397 &   0.999077349  \\  \cline{3-9}
& &  320 &     3.44E-14 &     8.99&     5.59E-14 &     8.95 &   0.000000002 &   0.999931925  \\  \hline
  \end{tabular}
\caption{$L^1$ and $L^\infty$ errors and orders for $u_t+u_x=0$ with initial 
condition $u(x,0)=\sin^4(x)$. $T=1$. The over-diffusive global Lax-Friedrichs flux \eqref{eq: over_diff} is used 
with $\alpha=1.2$. FVRK9. }
\label{tab303}
\end{table}
\section{Numerical simulations}
\label{sec4}
\setcounter{equation}{0}
\setcounter{figure}{0}
\setcounter{table}{0}

In this section, we present numerical tests of the proposed MPP high order FV RK WENO method for convection diffusion problems. Schemes with and without MPP limiters are compared.
In these tests, the time step size for the RK method is chosen such that
\begin{equation}
\Delta t = \min \Big( \frac{CFLC}{\max|f'(u)|}\Delta x , \frac{CFLD}{\max|a'(u)|} \Delta x^2 \Big),
\end{equation}
for one dimensional problems and
\begin{equation}
\Delta t = \min \Big( \frac{CFLC}{\max|f'(u)|/\Delta x + \max|g'(u)|/\Delta y} , \frac{CFLD}{\max|a'(u)|/\Delta x^2 + \max|b'(u)|/\Delta y^2} \Big),
\end{equation}
for two dimensional problems. Here CFLC (CFLD resp.) represents the CFL number for the convection (diffusion resp.) term.
In our tests, we take $CFLC = 0.6$ for convection-dominated problems and $CFLD = 0.8$ for pure diffusion problems. Herein we let ``MPP" and ``NonMPP" denote the scheme with and without the MPP limiter, and $U_{\max}\ (U_{\min} \ \mbox{resp.})$ denote the maximum (minimum resp.) value among the numerical cell averages $\bar{u}_j$. To better illustrate the effectiveness of the MPP limiters, we use linear weights instead of WENO weights in the reconstruction procedure for the convection term.

\subsection{Basic Tests}

\begin{exa}(1D Linear Problem)
\begin{equation}\label{1dlinear}
u_t+u_x=\epsilon u_{xx}, \ x\in [0, 2\pi], \ \epsilon = 0.00001.
\end{equation}
We test the proposed scheme on the problem (\ref{1dlinear}) with initial condition $u(x,0)=\sin^4(x)$ and periodic boundary condition. The exact solution is
\begin{equation}
\label{1daccuracy}
u(x,t)=\frac{3}{8} - \frac{1}{2}\exp(-4\epsilon t)\cos(2(x-t)) + \frac{1}{8}\exp(-16\epsilon t)\cos(4(x-t)).
\end{equation}
The $L_1$ and $L_\infty$ errors and orders of convergence for the scheme with and without MPP limiters are shown in Table \ref{tableexample1}. It is observed that the MPP limiter avoids overshooting and undershooting of the numerical solution while preserve high order accuracy.

\begin{table}[h]\footnotesize
\centering
        \begin{tabular}{|l  ||c   ||c  |c   ||c  |c   ||c  |c   |}

            \hline
                                                            & mesh                                  &$L_1$ error                                 &order                                  &$L_{\infty}$ error      &order      &Umax    &Umin         \\
            \hline
            \hline
            \multirow{6}{*}{Non-}    & \multicolumn{1}{l ||}{50}    & \multicolumn{1}{l |}{1.68E-04 }      & \multicolumn{1}{l ||}{---}        & \multicolumn{1}{l |}{2.76E-04 }      & \multicolumn{1}{l ||}{---} & \multicolumn{1}{l |}{0.996998594480 }      & \multicolumn{1}{l |}{-0.000182938402}\\
            \cline{2-8}
            \multirow{6}{*}{MPP}                                               & \multicolumn{1}{l ||}{100}    & \multicolumn{1}{l |}{5.47E-06 }      & \multicolumn{1}{l ||}{4.94 }      & \multicolumn{1}{l |}{9.11E-06 }      & \multicolumn{1}{l ||}{4.92  } & \multicolumn{1}{l |}{0.997933416789  }      & \multicolumn{1}{l |}{-0.000005718342  }\\
            \cline{2-8}
                                                           & \multicolumn{1}{l ||}{200}    & \multicolumn{1}{l |}{1.72E-07 }      & \multicolumn{1}{l ||}{4.99 }       & \multicolumn{1}{l |}{2.87E-07 }      & \multicolumn{1}{l ||}{4.99 } & \multicolumn{1}{l |}{0.999579130130 }      & \multicolumn{1}{l |}{-0.000000153518  }\\
            \cline{2-8}
                                                           & \multicolumn{1}{l ||}{400}    & \multicolumn{1}{l |}{5.38E-09 }      & \multicolumn{1}{l ||}{5.00 }       & \multicolumn{1}{l |}{9.00E-09 }      & \multicolumn{1}{l ||}{5.00} & \multicolumn{1}{l |}{0.999905929907 }      & \multicolumn{1}{l |}{-0.000000002134  }\\
            \cline{2-8}
                                                           & \multicolumn{1}{l ||}{800}    & \multicolumn{1}{l |}{1.68E-10 }      & \multicolumn{1}{l ||}{5.00}     & \multicolumn{1}{l |}{2.81E-10 }      & \multicolumn{1}{l ||}{5.00} & \multicolumn{1}{l |}{0.999945898951 }      & \multicolumn{1}{l |}{0.000000001890 }\\
            \cline{2-8}

            \hline
            \hline
            \multirow{6}{*}{MPP}    & \multicolumn{1}{l ||}{50}    & \multicolumn{1}{l |}{1.71E-04  }      & \multicolumn{1}{l ||}{---}     & \multicolumn{1}{l |}{2.87E-04 }      & \multicolumn{1}{l ||}{---} & \multicolumn{1}{l |}{0.996998296191 }      & \multicolumn{1}{l |}{0.000000000000  }\\
            \cline{2-8}
                                                           & \multicolumn{1}{l ||}{100}    & \multicolumn{1}{l |}{5.46E-06 }      & \multicolumn{1}{l ||}{4.93 }     & \multicolumn{1}{l |}{1.34E-05 }      & \multicolumn{1}{l ||}{4.42  } & \multicolumn{1}{l |}{ 0.997933416819 }      & \multicolumn{1}{l |}{ 0.000000016274  }\\
            \cline{2-8}
                                                           & \multicolumn{1}{l ||}{200}    & \multicolumn{1}{l |}{1.72E-07 }      & \multicolumn{1}{l ||}{ 5.00 }      & \multicolumn{1}{l |}{4.91E-07  }      & \multicolumn{1}{l ||}{4.77 } & \multicolumn{1}{l |}{0.999579130130 }      & \multicolumn{1}{l |}{0.000000013987 }\\
            \cline{2-8}
                                                           & \multicolumn{1}{l ||}{400}    & \multicolumn{1}{l |}{5.38E-09 }      & \multicolumn{1}{l ||}{ 5.03  }      & \multicolumn{1}{l |}{1.25E-08   }      & \multicolumn{1}{l ||}{5.29  } & \multicolumn{1}{l |}{0.999905929907 }      & \multicolumn{1}{l |}{0.000000001048   }\\
            \cline{2-8}
                                                           & \multicolumn{1}{l ||}{800}    & \multicolumn{1}{l |}{1.68E-10 }      & \multicolumn{1}{l ||}{5.01  }      & \multicolumn{1}{l |}{ 2.81E-10 }      & \multicolumn{1}{l ||}{5.48 } & \multicolumn{1}{l |}{ 0.999945898951  }      & \multicolumn{1}{l |}{ 0.000000001890  }\\
            \cline{2-8}

            \hline

        \end{tabular}

    \caption{Accuracy tests for 1D linear equation \eqref{1dlinear} with exact solution \eqref{1daccuracy} at time $T=1.0$.}
    \label{tableexample1}
\end{table}

We then test problem (\ref{1dlinear}) with the initial condition having rich solution structures
\begin{equation}
\label{1ddiscontinuous}
u_0(x)= \begin{cases}
                          \frac{1}{6}(G(x,\beta,z-\delta)+G(x,\beta,z+\delta)+4G(x,\beta,z)), \ \ \ &-0.8\le x \le -0.6; \\
                          1,                                 \ \ \                                 &-0.4\le x \le -0.2; \\
                          1-|10(x-0.1)|,                     \ \ \                                 &0 \le x \le 0.2;    \\
                          \frac{1}{6}(F(x,\gamma,a-\delta)+F(x,\gamma,a+\delta)+4F(x,\gamma,a)), \ \ \ &0.4\le x \le 0.6; \\
                          0, \ \ \                                                                &\mbox{ otherwise}.
                          \end{cases}
\end{equation}
where $G(x,\beta, z)=e^{-\beta(x-z)^2}$ and $F(x,\gamma,a)=\sqrt{\max(1-\gamma^2(x-a)^2,0)}$. The constants involved are $a=0.5, z=-0.7, \delta=0.005, \gamma=10$ and $\beta=\log 2/(36\delta^2)$ and the boundary condition is periodic. The maximum and minimum cell averages are listed in Table \ref{table1_1}. In Figure \ref{figure1}, the effectiveness of the MPP limiters in controlling the numerical solution within theoretical bounds can be clearly observed.
\begin{table}[h]\footnotesize
\centering

\begin{tabular}{|l ||c |c ||c |c |}
\hline
        & \multicolumn{2}{c ||}{NonMPP} & \multicolumn{2}{c|}{MPP} \\
\hline
  mesh  & Umax & Umin & Umax & Umin \\
\hline
   50 & 1.106238399422 &-0.114766938420   & 1.000000000000 & 0.000000000000  \\ \hline
   100 & 1.056114534445 &-0.067351423479   & 1.000000000000 & 0.000000000000  \\ \hline
   200 & 1.054864483784 &-0.054928012204   & 1.000000000000 & 0.000000000000  \\ \hline
   400 & 1.048250067722 &-0.048250171364   & 1.000000000000 & 0.000000000000  \\ \hline
   800 & 1.031246517796 &-0.031246517794   & 1.000000000000 & 0.000000000000  \\ \hline

\end{tabular}
\caption{The maximum and minimum values of the numerical cell averages for problem \eqref{1dlinear} with initial conditions \eqref{1ddiscontinuous} at time $T=1.0$.}
\label{table1_1}

\end{table}

\begin{figure}
\begin{center}
\includegraphics[width=3.0in]{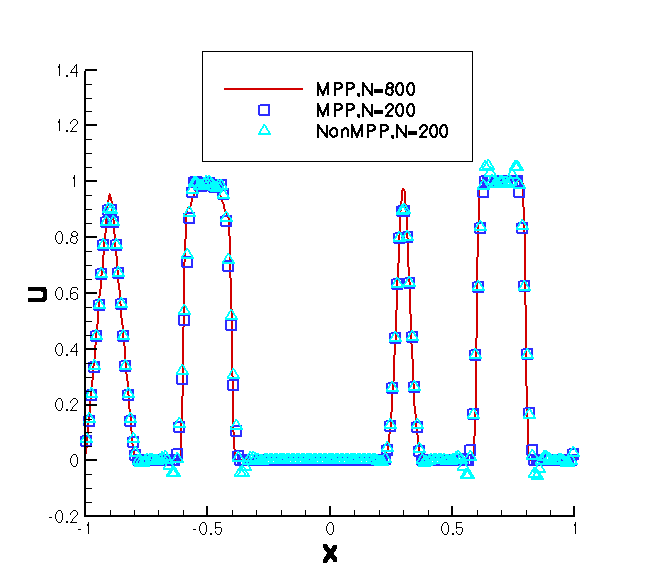}
\includegraphics[width=3.0in]{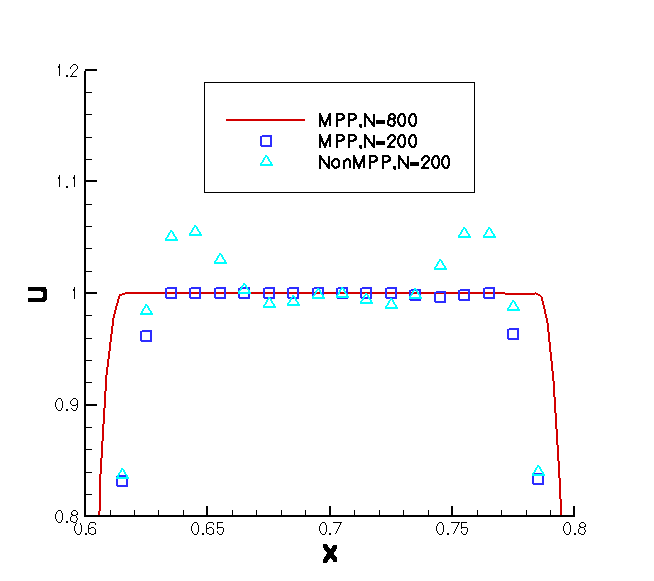}
\end{center}
\caption{Left: Comparison of the FV RK scheme with and without MPP limiters for 1d linear problem \eqref{1dlinear} with initial condition \eqref{1ddiscontinuous} at $T=1.0$. Right: Zoom-in around the overshooting.}
\label{figure1}
\end{figure}
\end{exa}

\begin{exa} (1D Nonlinear Equation)
We test the FV RK scheme with and without MPP limiters on Burgers' equation
\begin{equation}\label{burgersequation}
u_t+(\frac{u^2}{2})_x=\epsilon u_{xx},  \ x\in [-1,1], \ \epsilon = 0.0001,
\end{equation}
with initial condition
\begin{equation}\nonumber
u(x,0)=\begin{cases} 2,  \ \ \ &|x|<0.5; \\
              0, \ \ \ &\mbox{otherwise},
              \end{cases}
\end{equation}
and periodic boundary conditions. The results in Table \ref{tableexample2} shows that the numerical solution goes beyond the theoretical bounds if no limiters are applied and stays within the theoretical range if MPP limiters are applied.

\begin{table}[h]\footnotesize
\centering

\begin{tabular}{|l ||c |c ||c |c |}
\hline
        & \multicolumn{2}{c ||}{NonMPP} & \multicolumn{2}{c|}{MPP} \\
\hline
  mesh  & Umax & Umin & Umax & Umin \\
\hline
    50 & 2.349929038912 &-0.063536142936  & 1.818784698878 & 0.000000000000  \\ \hline
   100 & 2.438970633433 &-0.135799476071  & 1.879377697365 & 0.000000000000  \\ \hline
   200 & 2.217068598684 &-0.095548979222  & 1.913720603302 & 0.000000000000  \\ \hline
   400 & 2.216719764740 &-0.095114086983  & 1.938439146468 & 0.000000000000  \\ \hline
   800 & 2.210614277385 &-0.092745597929  & 1.959770865698 & 0.000000000000  \\ \hline
\end{tabular}
\caption{The maximum and minimum values of the numerical cell averages for Burgers' equation \eqref{burgersequation} at time $T=0.05$.}
\label{tableexample2}

\end{table}
\end{exa}

\begin{exa} (2D Linear Problem)
\begin{equation}\label{2dlinear}
u_t+u_x+u_y=\epsilon (u_{xx}+u_{yy}), \ \ \ (x,y)\in [0,2\pi]^2, \ \ \ \epsilon =0.001.
\end{equation}
We first consider the problem with initial condition $u(x,y,0)=\sin^4(x+y)$ and periodic boundary condition. The exact solution to the problem is
\begin{equation}
\label{2daccuracy}
u(x,y,t)=\frac{3}{8}-\frac{1}{2}\exp(-8\epsilon t)\cos(2(x+y-2t))+\frac{1}{8}\exp(-32\epsilon t)\cos(4(x+y-2t)).
\end{equation}
The $L_1$ and $L_\infty$ errors and orders of convergence for the FV RK scheme with and without MPP limiters are shown in Table \ref{tableexample3}. High order accuracy is preserved when the MPP limiters are applied to control the numerical solution within the theoretical bounds.
\begin{table}[h]\footnotesize
\centering
        \begin{tabular}{|l  ||c   ||c  |c   ||c  |c   ||c  |c   |}

            \hline
                                                            & mesh                                  &$L_1$ error                                 &order                                  &$L_{\infty}$ error      &order      &Umax    &Umin         \\
            \hline
            \hline

            \multirow{5}{*}{NonMPP}                                               & \multicolumn{1}{c ||}{$16\times 16$}    & \multicolumn{1}{l |}{4.86E-03 }      & \multicolumn{1}{l ||}{--- }      & \multicolumn{1}{l |}{9.30E-03 }      & \multicolumn{1}{l ||}{---  } & \multicolumn{1}{l |}{0.919696089900 }      & \multicolumn{1}{l |}{ 0.000159282060    }\\
            \cline{2-8}
                                                           & \multicolumn{1}{c ||}{$32\times 32$}    & \multicolumn{1}{l |}{2.85E-04 }      & \multicolumn{1}{l ||}{4.29 }       & \multicolumn{1}{l |}{4.49E-04 }      & \multicolumn{1}{l ||}{4.37  } & \multicolumn{1}{l |}{0.986054820018  }      & \multicolumn{1}{l |}{-0.000283832731  }\\
            \cline{2-8}
                                                           & \multicolumn{1}{c ||}{$64\times 64$}    & \multicolumn{1}{l |}{9.82E-06 }      & \multicolumn{1}{l ||}{4.84  }       & \multicolumn{1}{l |}{1.62E-05  }      & \multicolumn{1}{l ||}{4.79  } & \multicolumn{1}{l |}{0.995960434630  }      & \multicolumn{1}{l |}{-0.000004482350   }\\
            \cline{2-8}
                                                           & \multicolumn{1}{c ||}{$128\times 128$}    & \multicolumn{1}{l |}{3.12E-07  }      & \multicolumn{1}{l ||}{4.96 }     & \multicolumn{1}{l |}{5.22E-07 }      & \multicolumn{1}{l ||}{4.95  } & \multicolumn{1}{l |}{0.998407179488 }      & \multicolumn{1}{l |}{ 0.000001288422  }\\
            \cline{2-8}
                                                           & \multicolumn{1}{c ||}{$256\times 256$}    & \multicolumn{1}{l |}{9.73E-09 }      & \multicolumn{1}{l ||}{5.00 }      & \multicolumn{1}{l |}{1.63E-08 }      & \multicolumn{1}{l ||}{5.01  } & \multicolumn{1}{l |}{0.998990497491 }      & \multicolumn{1}{l |}{0.000000740680 }\\
            \hline
            \hline

            \multirow{5}{*}{MPP}     & \multicolumn{1}{c ||}{$16\times 16$}    & \multicolumn{1}{l |}{4.86E-03  }      & \multicolumn{1}{l ||}{ --- }     & \multicolumn{1}{l |}{ 9.30E-03 }      & \multicolumn{1}{l ||}{---  } & \multicolumn{1}{l |}{ 0.919696089900 }      & \multicolumn{1}{l |}{ 0.000159282060  }\\
            \cline{2-8}
                                                           & \multicolumn{1}{c ||}{$32\times 32$}    & \multicolumn{1}{l |}{2.87E-04 }      & \multicolumn{1}{l ||}{ 4.27 }      & \multicolumn{1}{l |}{4.49E-04  }      & \multicolumn{1}{l ||}{4.37  } & \multicolumn{1}{l |}{0.986054818813 }      & \multicolumn{1}{l |}{ 0.000000000000  }\\
            \cline{2-8}
                                                           & \multicolumn{1}{c ||}{$64\times 64$}    & \multicolumn{1}{l |}{9.82E-06 }      & \multicolumn{1}{l ||}{ 4.85  }      & \multicolumn{1}{l |}{1.64E-05   }      & \multicolumn{1}{l ||}{4.77    } & \multicolumn{1}{l |}{0.995960434630 }      & \multicolumn{1}{l |}{0.000000000000  }\\
            \cline{2-8}
                                                           & \multicolumn{1}{c ||}{$128\times 128$}    & \multicolumn{1}{l |}{3.12E-07 }      & \multicolumn{1}{l ||}{4.97 }      & \multicolumn{1}{l |}{ 5.22E-07 }      & \multicolumn{1}{l ||}{4.97   } & \multicolumn{1}{l |}{ 0.998407179488  }      & \multicolumn{1}{l |}{ 0.000001288422  }\\
            \cline{2-8}
                                                           & \multicolumn{1}{c ||}{$256\times 256$}    & \multicolumn{1}{l |}{9.73E-09 }      & \multicolumn{1}{l ||}{5.00 }     & \multicolumn{1}{l |}{1.63E-08 }      & \multicolumn{1}{l ||}{5.01  } & \multicolumn{1}{l |}{0.998990497491 }      & \multicolumn{1}{l |}{ 0.000000740680  }\\
            \hline

        \end{tabular}

    \caption{Accuracy tests for 2D linear equation \eqref{2dlinear} with exact solution \eqref{2daccuracy} at time $T=1.0$.}
    \label{tableexample3}
\end{table}

We then consider problem (\ref{2dlinear}) with initial condition
\begin{equation}
\label{2ddiscontinuous}
u(x,0)=\begin{cases} 1, \ \ \ \ &(x,y)\in [\frac{\pi}{2},\frac{3\pi}{2}]\times[\frac{\pi}{2},\frac{3\pi}{2}]; \\
              0, \ \ & \mbox{otherwise on }[0,2\pi]\times [0,2\pi],
              \end{cases}
\end{equation}
and periodic boundary condition. The results are shown in Table \ref{tableexample32}, which indicates the effectiveness of the MPP limiter.

\begin{table}[h]\footnotesize
\centering

\begin{tabular}{|c ||c |c ||c |c |}
\hline
        & \multicolumn{2}{c ||}{NonMPP} & \multicolumn{2}{c|}{MPP} \\
\hline
  mesh  & Umax & Umin & Umax & Umin \\
\hline
     $16\times16$  &  1.196476571354 &-0.102486638966    & 1.000000000000 & 0.000000000000  \\ \hline
    $32\times32$ & 1.317444117818 &-0.169214623680  & 1.000000000000 & 0.000000000000  \\  \hline
    $64\times64$ & 1.341696522446 &-0.182902057169    & 1.000000000000 & 0.000000000000  \\ \hline
    $128\times128$ &  1.225931525834 &-0.116989442889    & 1.000000000000 & 0.000000000000  \\ \hline
   $256\times256$ &  1.108731559448 &-0.055808238605  & 1.000000000000 & 0.000000000000  \\ \hline

\end{tabular}
\caption{Maximum and minimum cell averages in the 2D linear problem \eqref{2dlinear} with initial condition \eqref{2ddiscontinuous} at time $T=0.1$.}
\label{tableexample32}

\end{table}
\end{exa}

\begin{exa} (1D Buckley-Leverett Equation)
Consider the problem
\begin{equation}\label{1dbuckley-leverett}
u_t+f(u)_x=\epsilon(\nu(u)u_x)_x, \ \ \ \epsilon = 0.01,
\end{equation}
where
$$\nu(u)=\begin{cases} 4u(1-u), \ \ \ \ &0\le u \le 1; \\
              0, \ \ \ \ \ \ \ \ \ \ \ \ \ \ \ &\mbox{otherwise},
              \end{cases}
              \quad \mbox{and} \quad f(u)=\frac{u^2}{u^2+(1-u)^2}.
              $$
The initial condition is
$$u(x,0)=\begin{cases} 1-3x, \ \ \ \ 0\le x < \frac{1}{3}; \\
              0, \ \ \ \ \ \ \ \ \ \ \ \frac{1}{3} \le x \le 1,
              \end{cases}
              $$
and the boundary conditions are $u(0,t)=1$ and $u(1,t)=0$.
The numerical results are shown in Table \ref{tableexample4}. The numerical solution goes below $0$ if MPP limiters are not applied, and stays within the theoretical bounds $[0,1]$ when MPP limiters are applied. Figure \ref{figure4} illustrates the effectiveness of MPP limiters near the undershooting of the numerical solution.

\begin{table}[h]\footnotesize
\centering

\begin{tabular}{|l ||c |c ||c |c |}
\hline
        & \multicolumn{2}{c ||}{NonMPP} & \multicolumn{2}{c|}{MPP} \\
\hline
  mesh  & Umax & Umin & Umax & Umin \\
\hline

    50 &  1.000000000000000 &-0.002643266424381  & 1.000000000000000 & 0.000000000000000  \\ \hline
   100 &  1.000000000000000 &-0.001813338703220  & 1.000000000000000 & 0.000000000000000  \\ \hline
   200 &  1.000000000000000 &-0.000942402907667  & 1.000000000000000 & 0.000000000000000  \\ \hline
   400 &  1.000000000000000 &-0.000491323673758  & 1.000000000000000 & 0.000000000000000  \\ \hline
   800 &  1.000000000000000 &-0.000247268741213  & 1.000000000000000 & 0.000000000000000  \\ \hline

\end{tabular}
\caption{The maximum and minimum values for 1D Buckley-Leverett problem \eqref{1dbuckley-leverett} at time $T=0.2$.}
\label{tableexample4}

\end{table}

\begin{figure}
\begin{center}
\includegraphics[width=3.0in]{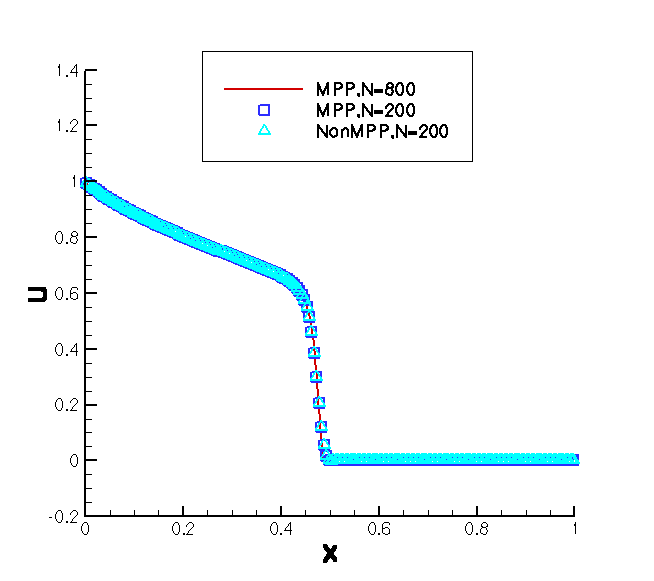}
\includegraphics[width=3.0in]{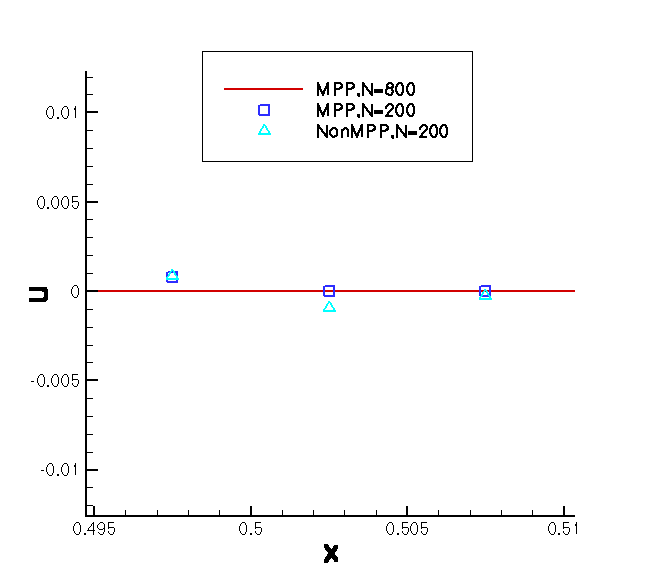}
\end{center}
\caption{Left: Solutions for 1D Buckley-Leverett equation \eqref{1dbuckley-leverett} at $T=0.2$. Right: Zoom-in around the undershooting.  }
\label{figure4}
\end{figure}

\end{exa}

\begin{exa}(2D Buckley-Leverett Equation)
Consider
\begin{equation}\label{2dbuckley-leverett}
u_t+f(u)_x+g(u)_y=\epsilon(u_{xx}+u_{yy}), \ \ \ (x,y)\in [-1.5,1.5]^2, \ \ \ \epsilon =0.01
\end{equation}
where
$$f(u)=\frac{u^2}{u^2+(1-u)^2},\ \ g(u)=f(u)(1-5(1-u)^2),$$
with initial condition
$$u(x,y,0)=\begin{cases} 1, \ \ \ \ &x^2+y^2<0.5; \\
              0, \ \ \ \ &\mbox{otherwise on }[-1.5,1.5]^2,
              \end{cases}
              $$
and periodic boundary conditions. The numerical results in Table \ref{tableexample5} show that the MPP limiters effectively control the numerical solution within the theoretical range $[0,1]$.

\begin{table}[h]\footnotesize
\centering

\begin{tabular}{|c ||c |c ||c |c |}
\hline
        & \multicolumn{2}{c ||}{NonMPP} & \multicolumn{2}{c|}{MPP} \\
\hline
  mesh  & Umax & Umin & Umax & Umin \\
\hline

  $16\times 16 $  & 1.190542402917 &-0.142603740886   & 1.000000000000 & 0.000000000000  \\ \hline
    $32\times 32$   & 1.183357844800 &-0.174592560044   & 1.000000000000 & 0.000000000000  \\ \hline
    $64\times 64$   & 1.148424330885 &-0.167227853261   & 1.000000000000 & 0.000000000000  \\ \hline
   $128\times 128$  & 1.084563025034 &-0.083883559766   & 1.000000000000 & 0.000000000000  \\ \hline
   $256\times 256$  & 0.998736899089 &-0.018463025969   & 0.998566263416 & 0.000000000000   \\ \hline

\end{tabular}
\caption{Maximum and minimum cell averages for 2D Buckley-Leverett problem \eqref{2dbuckley-leverett} at time $T=0.5$.}
\label{tableexample5}

\end{table}
\end{exa}

\begin{exa} (1D Porous Medium Equation)
Consider
\begin{equation}\label{1dporous}
u_t=(u^m)_{xx}, \ m>1, \ \ \ x\in [-2\pi,2\pi]
\end{equation}
whose solution is the Barenblatt solution in the following form
\begin{equation}
B_m(x,t)=t^{-k}\Big[(1-\frac{k(m-1)}{2m}\frac{|x|^2}{t^{2k}})_{+}\Big]^{\frac{1}{m+1}},
\end{equation}
with $k=\frac{1}{m+1}$ and $u_+=\max(u,0)$. The boundary conditions are assumed to be zero at both ends. Starting from time $T_0=1$, we compute the numerical solution of the problem up to time $T=2$ by the FV RK scheme and the results are shown in Table \ref{tableexample6}. Obviously, there are undershoots when regular FV RK scheme are applied. And the MPP limiters can effectively eliminate the overshoots in the numerical solution. Also the plot in Figure \ref{figure6} shows the effectiveness of the MPP limiters.

\begin{table}[h]\footnotesize
\centering

\begin{tabular}{|c ||c |c ||c |c |}
\hline
   $N=100$      & \multicolumn{2}{c ||}{NonMPP} & \multicolumn{2}{c|}{MPP} \\
\hline
  m   & Umax & Umin & Umax & Umin \\
\hline
2  & 0.793283780606 &-0.000338472445  & 0.793283375962 & 0.000000000000    \\ \hline

3  & 0.840666629482 &-0.001792679096  & 0.840663542409 & 0.000000000000    \\ \hline

5  & 0.890829374423 &-0.005693908465  & 0.890821177490 & 0.000000000000    \\ \hline

8  & 0.925837535365 &-0.003841778007  & 0.925826127818 & 0.000000000000    \\ \hline

\end{tabular}
\caption{Maximum and minimum cell average values for 1D porous medium problem \eqref{1dporous} with $m=2,3,5,8$ at time $T=2$.}
\label{tableexample6}

\end{table}

\begin{figure}
\begin{center}
\includegraphics[width=3.0in]{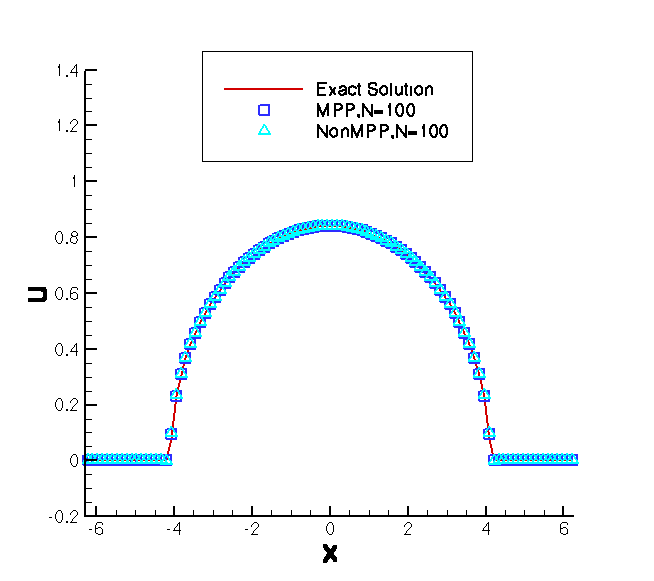}
\includegraphics[width=3.0in]{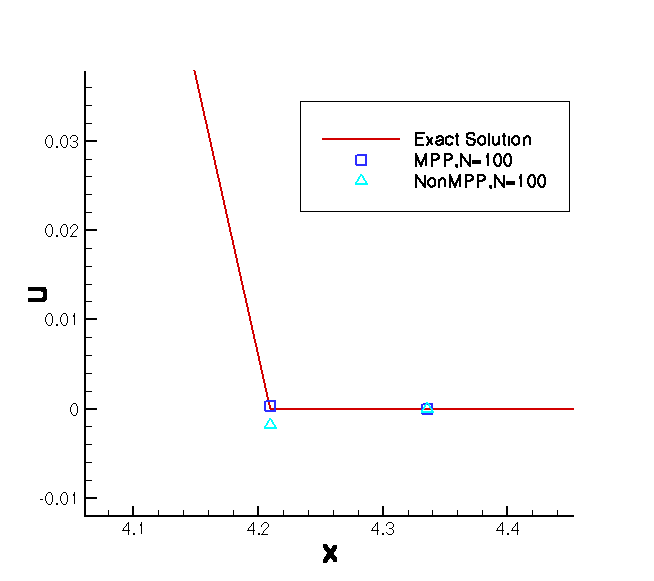}
\includegraphics[width=3.0in]{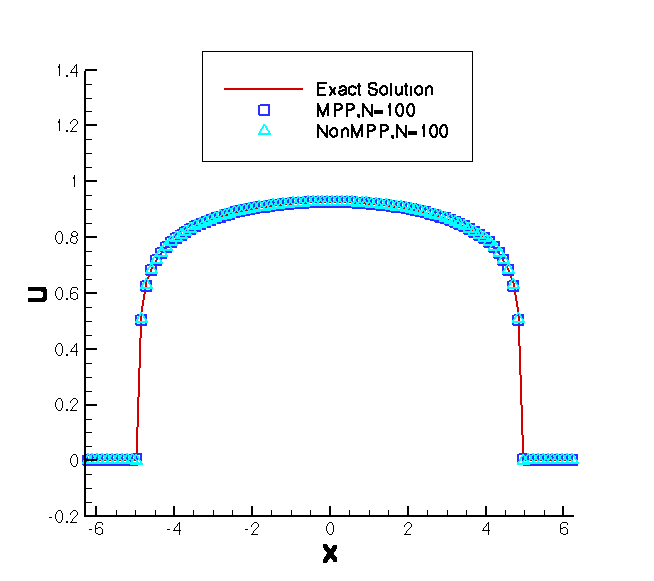}
\includegraphics[width=3.0in]{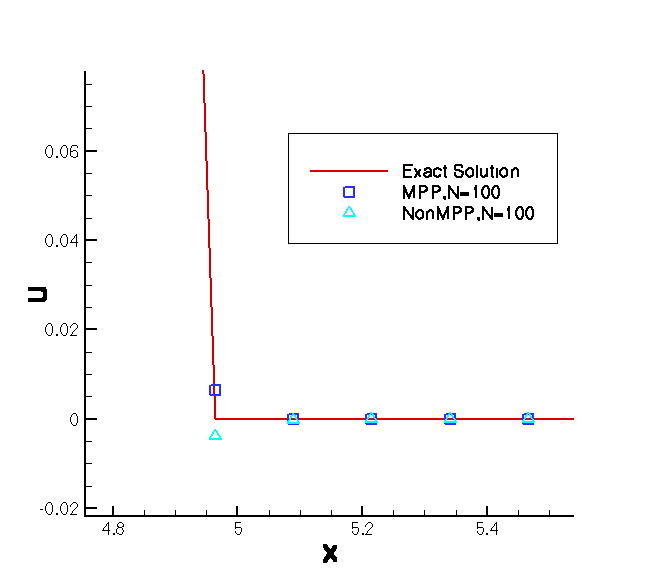}
\end{center}
\caption{Left: Plot for 1D porous medium problem \eqref{1dporous} with N=100 at $T=2$. Top is for m=3 and bottom is for m=8. Right: Zoom-in around the undershooting.}
\label{figure6}
\end{figure}
\end{exa}

\begin{exa}(2D Porous Medium Equation)
Consider
\begin{equation}\label{2dporous}
u_t=(u^m)_{xx}+(u^m)_{yy}, \ \ m=2, \ \ \ (x,y)\in [-1,1]^2
\end{equation}
with initial condition
$$u(x,y,0)=\begin{cases} 1, \ \ \ \ &(x,y)\in [-\frac{1}{2},\frac{1}{2}]^2; \\
              0, \ \ \ \ & \mbox{otherwise on } [-\frac{1}{2},\frac{1}{2}]^2,
              \end{cases}$$
and periodic boundary conditions. We produce the numerical results at time $T=0.005$, as shown in Table \ref{tableexample7}. The results show that the MPP limiters perform effectively at avoiding overshooting and undershooting of the numerical solution.

\begin{table}[h]\footnotesize
\centering

\begin{tabular}{|c ||c |c ||c |c |}
\hline
        & \multicolumn{2}{c ||}{NonMPP} & \multicolumn{2}{c|}{MPP} \\
\hline
  mesh  & Umax & Umin & Umax & Umin \\
\hline

    $16\times 16$  & 1.000485743751 &-0.000349298087  & 0.999827816078 & 0.000000000000  \\ \hline
    $32\times 32$  & 0.999625786453 &-0.001200636807  & 0.999573139639 & 0.000000000000  \\ \hline
    $64\times 64$  & 0.999537081790 &-0.000855830629   & 0.999533087178 & 0.000000000000  \\ \hline
   $128\times 128$ & 0.999527411822 &-0.000474775257   & 0.999526635569 & 0.000000000000  \\ \hline
   $256\times 256$ & 0.999525567240 &-0.000261471521   & 0.999525309113 & 0.000000000000  \\ \hline

\end{tabular}
\caption{Maximum and minimum cell average values for 2D porous medium problem \eqref{2dporous} at time $T=0.005$.}
\label{tableexample7}

\end{table}


\end{exa}

\subsection{Incompressible Flow Problems}
In this subsection, we test the proposed scheme on incompressible flow problems in the form
\begin{align}
&\omega_t+(u\omega)_x+(v\omega)_y=\frac{1}{Re}(\omega_{xx}+\omega_{yy}),
\end{align}
where $\langle u,v \rangle$ is the divergence-free velocity field and Re is the Reynold number. The theoretical solution satisfies the maximum principle due to the divergence-free property of the velocity field. For the numerical solution to satisfy the maximum principle, discretized divergence-free condition needs to be considered, hence special treatment needs to be taken when low order flux for the convection term is designed. For details, see \cite{mpp_xqx}, according to which we design the low order monotone flux for the following incompressible problems.

\noindent \begin{exa}(Rotation with Viscosity)
\begin{equation}\label{rbr}
u_t+(-yu)_x+(xu)_y=\frac{1}{Re}(u_{xx}+u_{yy}), \ \ (x,y)\in[-\pi,\pi]^2.
\end{equation}
The initial condition is shown in Figure \ref{figure8_1} and the boundary condition is assumed to be periodic. The numerical solution at time $T=0.1$ is shown in Table \ref{tableexample8}, which indicates that there are overshooting and undershooting in the numerical solution by regular FV RK scheme and they can be avoided by applying the MPP limiter. The solutions with and without MPP limiter are also compared in Figure \ref{figure8_2}. From Table \ref{tableexample8} and Figure \ref{figure8_2}, the effectiveness of the MPP limiter can be better illustrated when Renold number is larger. This is because the overshooting and undershooting are more apparent when Reynold number is larger, which corresponds to less diffusion.

\begin{figure}
\begin{center}
\includegraphics[width=3in]{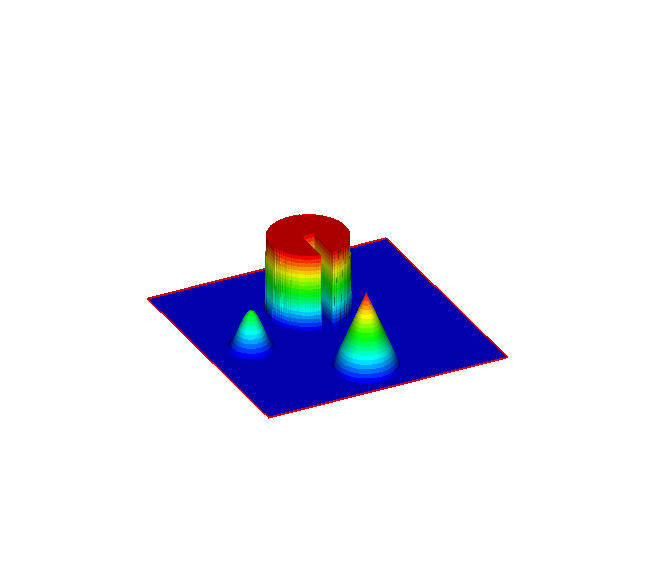}
\includegraphics[width=3in]{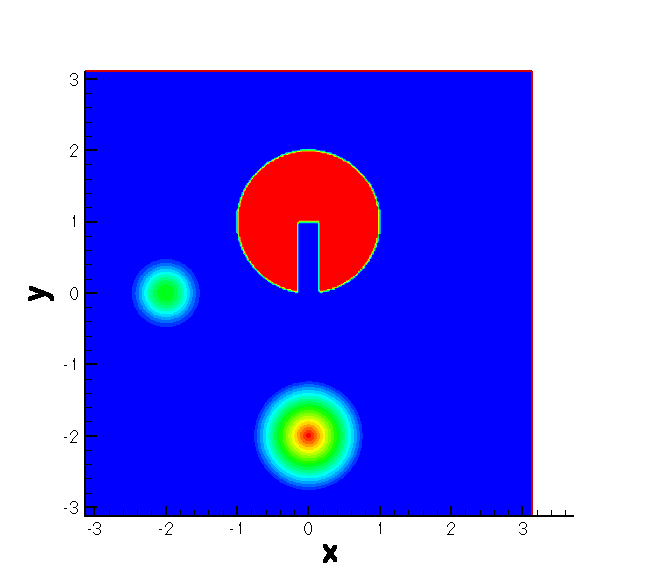}
\end{center}
\caption{Initial condition for Example 3.8 and Example 3.9.}
\label{figure8_1}
\end{figure}

\begin{table}[h]\footnotesize

\centering

\begin{tabular}{|c ||c |c ||c |c |}
\hline
Re=$100$    & \multicolumn{2}{c ||}{NonMPP} & \multicolumn{2}{c|}{MPP} \\
\hline
  mesh  & Umax & Umin & Umax & Umin \\
\hline
    $16\times  16 $ & 0.947915608973 &-0.041388485669    & 0.947719795318 & 0.000000000000    \\ \hline
    $32\times  32 $ & 0.999789765557 &-0.048836983632    & 0.996173203589 & 0.000000000000    \\ \hline
    $64\times 64  $ & 1.008171330748 &-0.039241271474    & 0.999999999928 & 0.000000000000    \\ \hline
   $128\times 128 $ & 1.002125190412 &-0.027962451582    & 0.999999999920 & 0.000000000000    \\ \hline
   $256\times 256 $ & 1.000099518450 &-0.012262487330    & 0.999999999983 & 0.000000000000    \\ \hline
\hline
Re=$10000$    & \multicolumn{2}{c ||}{NonMPP} & \multicolumn{2}{c|}{MPP} \\
\hline
  mesh  & Umax & Umin & Umax & Umin \\
\hline
    $16\times  16 $  & 0.949247968412 &-0.042285048496  & 0.949049295419 & 0.000000000000    \\ \hline
    $32\times  32 $  & 1.002247494119 &-0.053653247391  & 0.996943318800 & 0.000000000000    \\ \hline
    $64\times 64  $  & 1.012845607701 &-0.049914946698  & 0.999999462216 & 0.000000000000    \\ \hline
   $128\times 128 $  & 1.009050027036 &-0.050526262050  & 0.999999999977 & 0.000000000000    \\ \hline
   $256\times 256 $  & 1.007608558521 &-0.058482843302  & 0.999999999995 & 0.000000000000    \\ \hline
\end{tabular}
\caption{The maximum and minimum cell averages for rotation problem (\ref{rbr}) with two different Reynold numbers at $T=0.1$.}
\label{tableexample8}

\end{table}

\begin{figure}
\begin{center}
\includegraphics[width=3.0in]{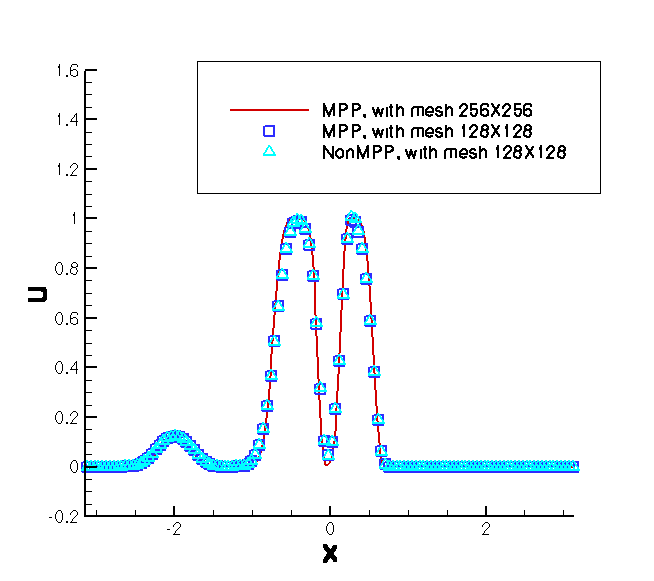}
\includegraphics[width=3.0in]{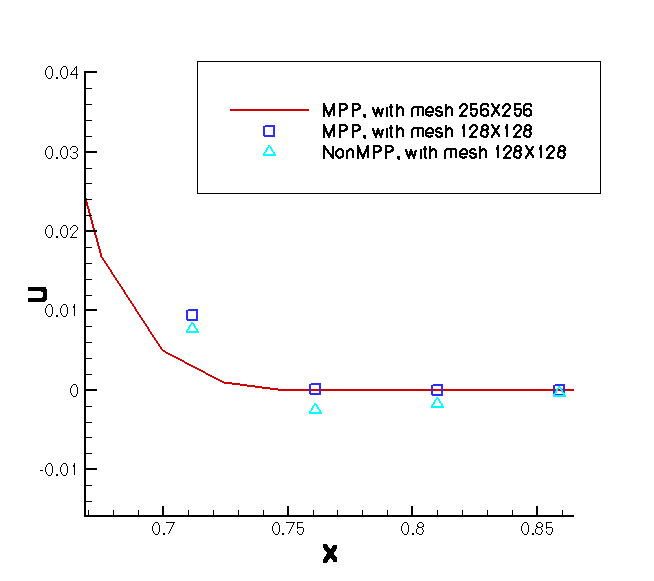}
\includegraphics[width=3.0in]{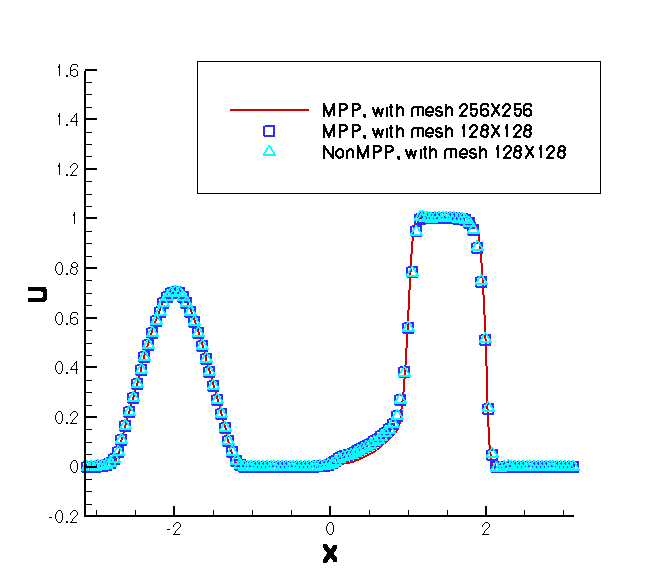}
\includegraphics[width=3.0in]{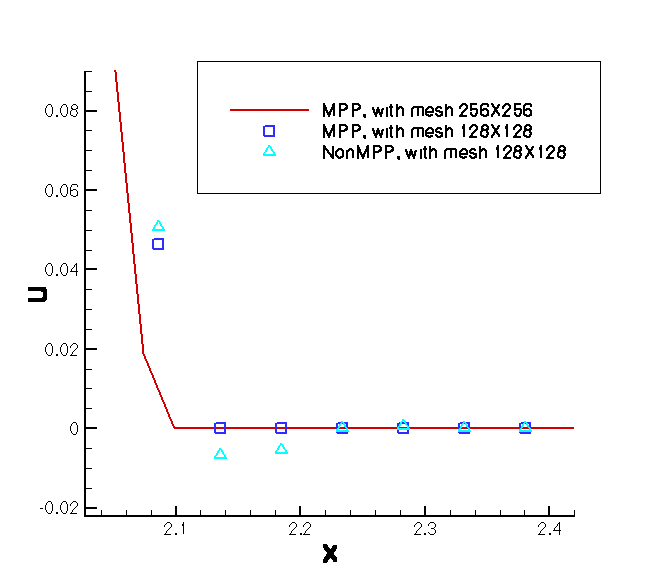}
\end{center}
\caption{Left: Cutting plots for rotation problem (\ref{rbr}) for Re=$10000$ at $T=0.1$. Right: Zoom-in around the undershooting. Top: cutting along $y=5\Delta y$ for $N_y=128$; Bottom: cutting along $x=0$. }
\label{figure8_2}
\end{figure}


\end{exa}

\begin{exa}(Swirling Deformation with Viscosity)
\begin{align}\label{sd}
&u_t+(-\cos^2(\frac{x}{2})\sin(y)g(t)u)_x+(\sin(x)\cos^2(\frac{y}{2})t(t))u)_y=\frac{1}{Re}(u_{xx}+u_{yy}),
\end{align}
where $(x,y)\in[-\pi,\pi]^2$ and $g(t)=\cos(\pi t /T)\pi$. The initial condition is the same as in Example 4.8 and the boundary conditions are also periodic. Similarly, we also compare the results for different Reynold numbers Re=$100$ and Re=$10000$. As shown in Table \ref{tableexample9}, the MPP limiter plays the role of eliminating overshooting and undershooting in the numerical solution, especially for problems with larger Reynold number. This can also be observed in Figure \ref{figure9_1}.

\begin{table}[h]\footnotesize
\centering

\begin{tabular}{|c ||c |c ||c |c |}
\hline
 Re=$100$       & \multicolumn{2}{c ||}{NonMPP} & \multicolumn{2}{c|}{MPP} \\
\hline
  mesh  & Umax & Umin & Umax & Umin \\
\hline
   $16\times  16 $ & 0.873440241699 &-0.010737472197  & 0.842184825192 & 0.000000000000  \\ \hline
    $32\times  32 $ & 0.971822334038 &-0.011947680561  & 0.942384582101 & 0.000000000000  \\ \hline
    $64\times 64  $ & 0.997563271155 &-0.005935366467  & 0.986960253479 & 0.000000000000  \\ \hline
   $128\times 128 $ & 1.000886437426 &-0.001258903421  & 0.998925498573 & 0.000000000000  \\ \hline
   $256\times 256 $ & 1.000040508119 &-0.000036182185  & 0.999992956155 & 0.000000000000  \\ \hline
\hline
Re=$10000$       & \multicolumn{2}{c ||}{NonMPP} & \multicolumn{2}{c|}{MPP} \\
\hline
  mesh  & Umax & Umin & Umax & Umin \\
\hline
    $16\times  16 $ & 0.874953790056 &-0.011212471543  & 0.846813512747 & 0.000000000000  \\ \hline
    $32\times  32 $ & 0.973964125865 &-0.014299538733  & 0.942368749644 & 0.000000000000    \\ \hline
    $64\times 64  $ & 1.000873875979 &-0.006640227946  & 0.988604733672 & 0.000000000000    \\ \hline
   $128\times 128 $ & 1.002350640870 &-0.002755842119  & 0.999375840770 & 0.000000000000  \\ \hline
   $256\times 256 $ & 1.000734372263 &-0.000563730690  & 0.999998986667 & 0.000000000000    \\ \hline
\end{tabular}
\caption{The maximum and minimum cell averages for swirling deformation problem (\ref{sd}) with two different Reynold numbers at T=$0.1$.}
\label{tableexample9}

\end{table}

\begin{figure}
\begin{center}
\includegraphics[width=3.0in]{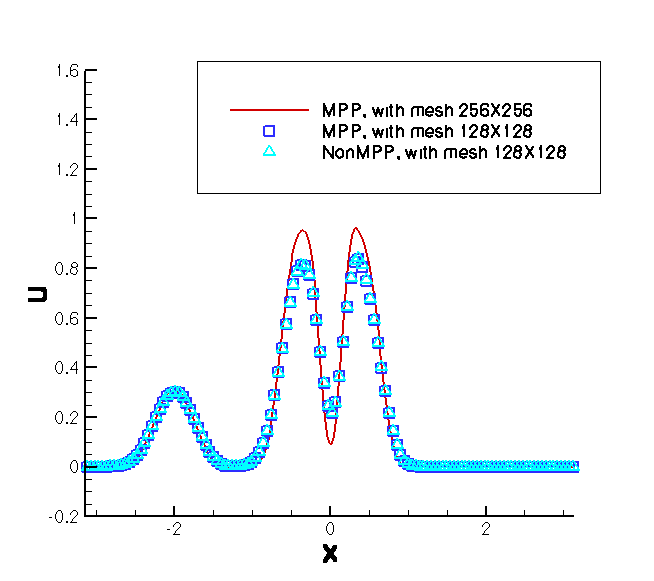}
\includegraphics[width=3.0in]{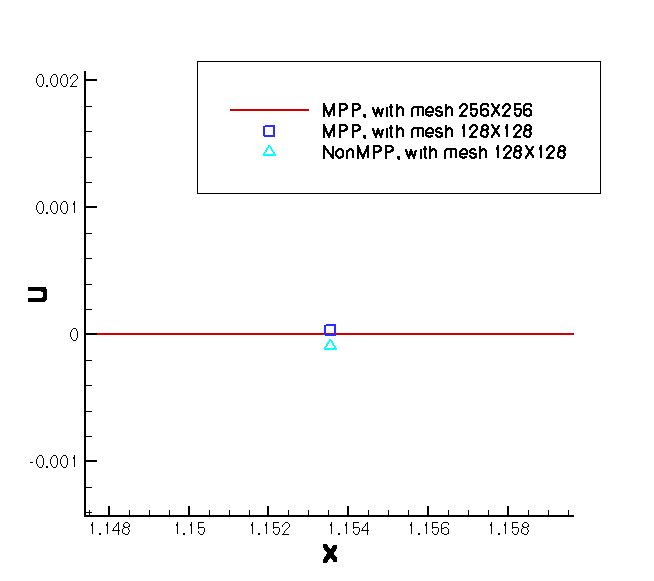}
\includegraphics[width=3.0in]{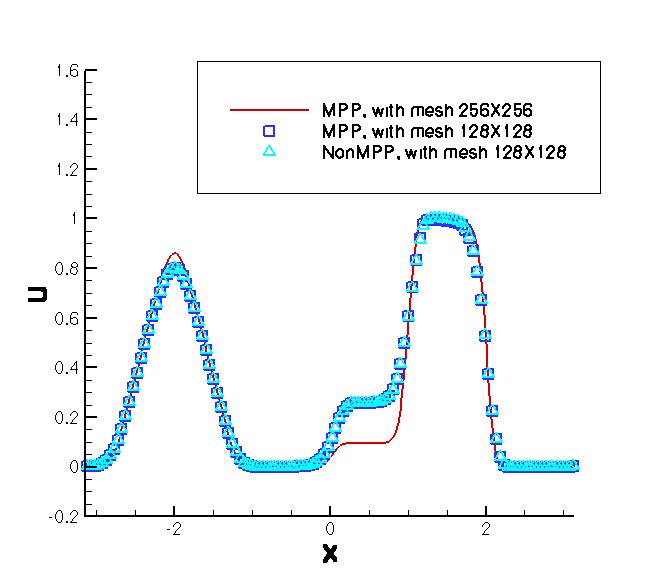}
\includegraphics[width=3.0in]{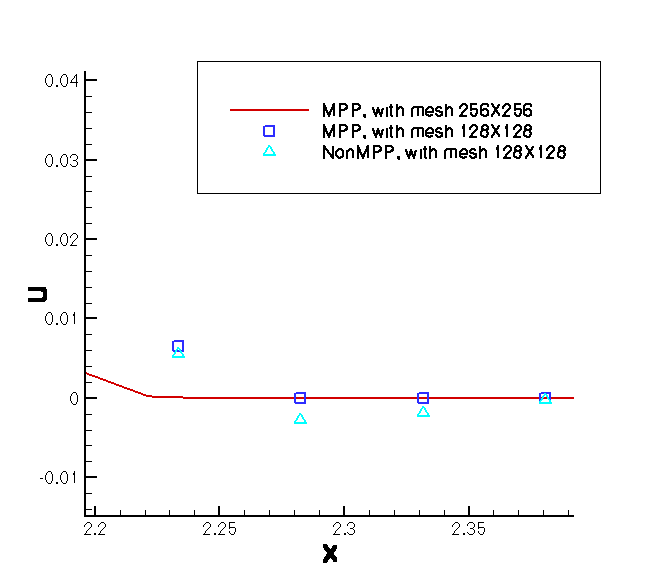}
\end{center}
\caption{Left: Cutting plots for swirling deformation problem (\ref{sd}) for Re=$10000$ at T=$0.1$. Right: Zoom-in around the undershooting. Top: cutting along $y=5\Delta y$ for $N_y=128$; Bottom: cutting along $x=0$. }
\label{figure9_1}
\end{figure}


\end{exa}






\begin{exa}(Vortex Patch)
Consider the problem
\begin{align}\label{vp}
&\omega_t+(u\omega)_x+(v\omega)_y=\frac{1}{Re}(\omega_{xx}+\omega_{yy}), \\
& \Delta \psi = \omega,\  \langle u,v \rangle = \langle -\psi_y, \psi_x \rangle,
\end{align}
with the following initial condition
\begin{equation}
\omega(x,y,0)=\begin{cases}
                       -1, \ &\frac{\pi}{2} \le x \le \frac{3\pi}{2}, \ \frac{\pi}{4} \le \frac{3\pi}{4}, \\
                       1, \ \ &\frac{\pi}{2} \le x \le \frac{3\pi}{2}, \ \frac{5\pi}{4} \le \frac{7\pi}{4}, \\
                       0, \ \ &\mbox{otherwise},
                       \end{cases}
\end{equation}
and periodic boundary condition. The maximum and minimum cell averages of the numerical solution with two Reynold numbers Re=$100$ and Re=$10000$, obtained by regular FV RK scheme and the scheme with the MPP limiter are compared in Table \ref{tableexample10}, from which we can observe the effectiveness of the MPP limiter in controlling overshooting and undershooting in the numerical solution. The contour plot of the solution is presented in Figure \ref{figure10}, which shows that the solution obtained by FV RK scheme with the MPP limiter is comparable to that obtained by regular FV RK scheme.

\begin{table}[h]\footnotesize
\centering

\begin{tabular}{|c ||c |c ||c |c |}
\hline
 Re=$100$       & \multicolumn{2}{c ||}{NonMPP} & \multicolumn{2}{c|}{MPP} \\
\hline
  mesh  & Umax & Umin & Umax & Umin \\
\hline

    $16\times  16 $ & 1.035853749815 &-1.035699868274   & 1.000000000000 &-1.000000000000    \\ \hline
    $32\times  32 $ & 1.054573231517 &-1.054663726026   & 1.000000000000 &-1.000000000000    \\ \hline
    $64\times 64  $ & 1.044017351861 &-1.044000125346   & 1.000000000000 &-1.000000000000    \\ \hline
   $128\times 128 $ & 1.010637311054 &-1.010641150928   & 1.000000000000 &-1.000000000000     \\ \hline
   $256\times 256 $ & 1.000000232315 &-1.000000231632   & 1.000000000000 &-1.000000000000     \\ \hline
\hline
Re=$10000$       & \multicolumn{2}{c ||}{NonMPP} & \multicolumn{2}{c|}{MPP} \\
\hline
  mesh  & Umax & Umin & Umax & Umin \\
\hline

    $16\times  16 $ & 1.036117022938 &-1.035951331163  & 1.000000000000 &-1.000000000000  \\ \hline
    $32\times  32 $ & 1.060652217270 &-1.060764279809  & 1.000000000000 &-1.000000000000  \\ \hline
    $64\times 64  $ & 1.086490500643 &-1.086296444198  & 1.000000000000 &-1.000000000000   \\ \hline
   $128\times 128 $ & 1.127323843780 &-1.127407543973  & 1.000000000000 &-1.000000000000  \\ \hline
   $256\times 256 $ & 1.129384376147 &-1.129395445889  & 1.000000000000 &-1.000000000000   \\ \hline

\end{tabular}

\caption{The maximum and minimum cell averages for vortex patch problem (\ref{vp}) at time T=$0.1$ with Re=$100$ and Re=$10000$. }
\label{tableexample10}

\end{table}


\begin{figure}
\centering
\includegraphics[width=3in]{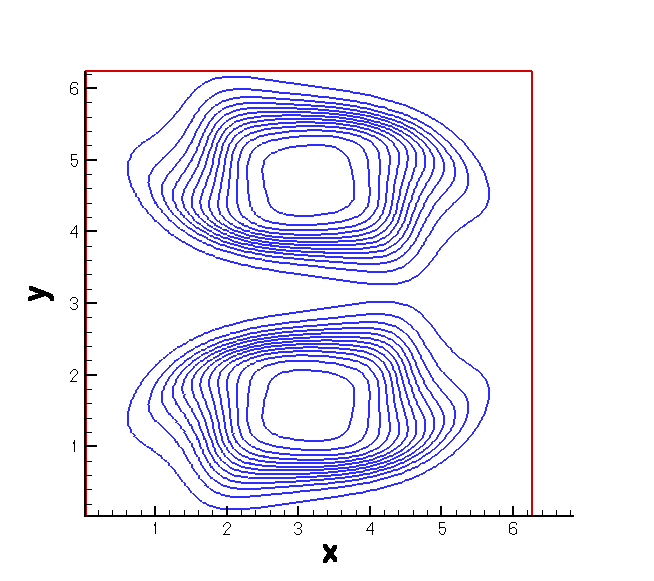},
\includegraphics[width=3in]{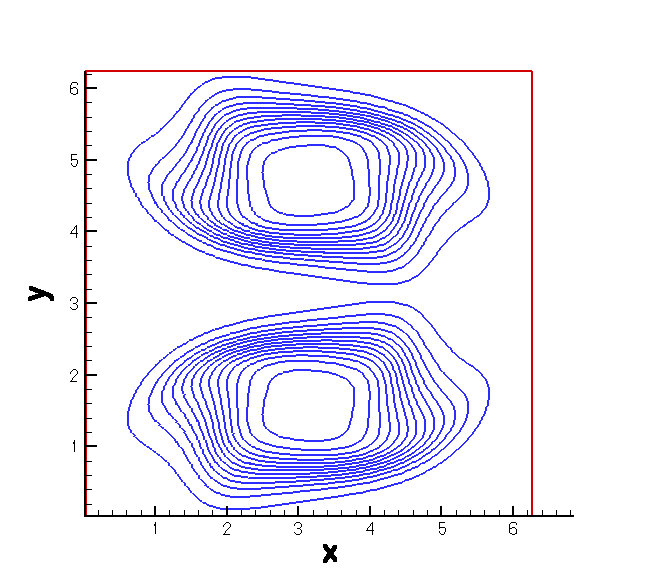}\\
\includegraphics[width=3in]{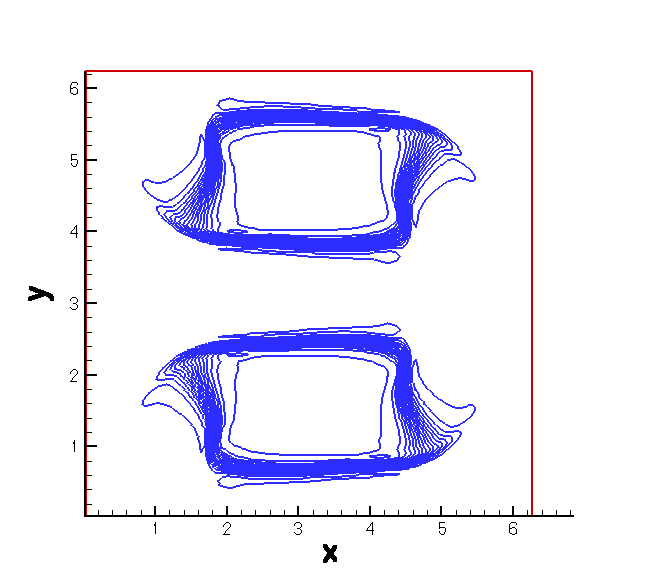},
\includegraphics[width=3in]{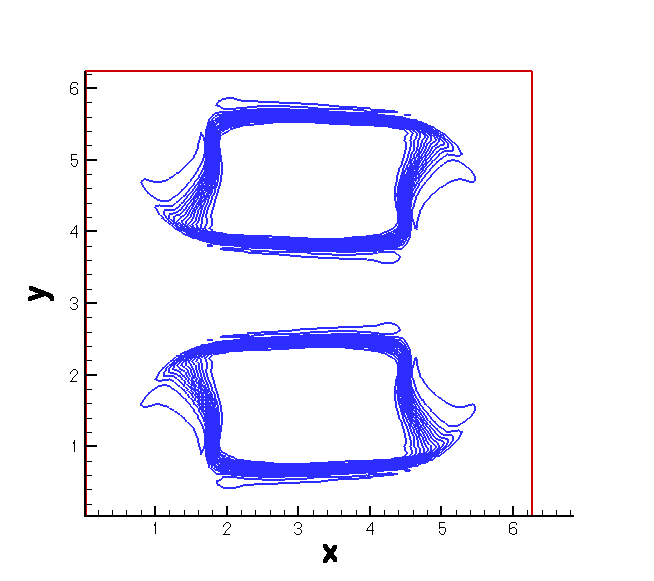}
\caption{Contours of the numerical solution for vortex patch problem (\ref{vp}) with Re=$100$ (top) and Re=$10000$ (bottom) at time $T=5$. The contours on the left are for the NonMPP scheme and those on the right are for the MPP scheme. 30 equally spaced contour lines within the range $[-1.1,1.1]$ are plotted.}
\label{figure10}
\end{figure}

\end{exa}

\section{Conclusion}
\label{sec6}
\setcounter{equation}{0}
\setcounter{figure}{0}
\setcounter{table}{0}

In this paper, we have successfully generalized the MPP flux limiters to the high order FV RK WENO schemes solving convection-dominated problems. For a special case, $f'(u)>0$ or $f'(u)<0$, we provide a complete analysis that the original high order FV RK WENO scheme coupled with the MPP flux limiters maintains high order accuracy and MPP property when Godunov type flux is used as the first order flux, toward which the high order numerical flux is limited. For a general setting, we rely on the Taylor expansion around extrema to prove that the FV RK schemes with MPP flux limiters preserve up to third order accuracy without addition CFL constraint. Establishing analysis for accuracy preservation under suitable constraints in a general setting will be part of our future work.



\bibliographystyle{siam}
\bibliography{refer}

\end{document}